 \definecolor{Red}{cmyk}{0,1,1,0.2}
  \def\d{b_\Omega}
 \def\oh{\overline{\gamma}}
 \def\L{{\rm Lip}}
 \def \t{\overline{t}_k}
 \def \lo{\mathcal{P}_{m_0}^{\rm Lip}(\Gamma)}
 \def \hl{H_L}
 \newtheorem{definition}{Definition}[section] 
 \theoremstyle{definition}
 \theoremstyle{remark}
 \newtheorem{remark}{Remark}[section]
  \theoremstyle{plain}
 \newtheorem{theorem}{Theorem}[section]
 \newtheorem{lemma}{Lemma}[section]
 \newtheorem{proposition}{Proposition}[section]
 \newtheorem{corollario}{Corollary}[section]
 \numberwithin{equation}{section}
 \title{\bf $\bf{C^{1,1}}$--smoothness of constrained solutions in the calculus of variations with application to mean field games\footnote{This work was partly supported by the University of Rome Tor Vergata (Consolidate the Foundations 2015) and by the Istituto Nazionale di Alta Matematica ``F. Severi'' (GNAMPA 2016 Research Projects). The authors acknowledge the MIUR Excellence Department Project awarded to the Department of Mathematics, University of Rome Tor Vergata, CUP E83C18000100006. The second author is grateful to the Università Italo Francese (Vinci Project 2015)}}
 \author{{\sc Piermarco Cannarsa}\thanks{%
 		Dipartimento di Matematica, Universit\`{a} di Roma ``Tor Vergata" - {\tt cannarsa@mat.uniroma2.it}}\\
 	{\sc Rossana Capuani}\thanks{%
 		 		Dipartimento di Matematica, Universit\`{a} di Roma ``Tor Vergata" and CEREMADE, Universit\'e Paris-Dauphine - {\tt capuani@mat.uniroma2.it}}\\ {\sc Pierre Cardaliaguet}\thanks{%
 		 		CEREMADE, Universit\'e Paris-Dauphine - {\tt cardaliaguet@ceremade.dauphine.fr}}
 		 	\\}
 \date{}
\begin{document}
 	\maketitle
 \begin{abstract}
 \noindent
We derive necessary optimality conditions for minimizers of regular functionals in the calculus of variations under smooth state constraints. In the literature, this classical problem is widely investigated. The novelty of our result lies in the fact that the presence of state constraints enters the Euler-Lagrange equations as a local feedback, which allows to derive the $C^{1,1}$-smoothness of solutions. As an application, we discuss a constrained Mean Field Games problem, for which our optimality conditions allow to construct Lipschitz relaxed solutions, thus improving the existence result in [Cannarsa, P., Capuani, R., \textsl{Existence and uniqueness for Mean Field Games with state constraints}, http://arxiv.org/abs/1711.01063].
 \end{abstract}
 \noindent \textit{Keywords: necessary conditions, state constraints, constrained MFG equilibrium} \\
 \noindent \textbf{MSC Subject classifications}: 49K15, 49K30, 49J15, 49N90\\
 \section{Introduction}
 
  The centrality of necessary conditions in optimal control  is well-known and has originated an immense literature in the fields of optimization and nonsmooth analysis, see, e.g., \cite{aa}, \cite{clarke}, \cite{dubovitskii1964extremum}, \cite{lr}, \cite{rv}, \cite{3v}.
 
 In control theory, the celebrated Pontryagin Maximum Principle plays the role of the classical Euler-Lagrange equations in the calculus of variations. In the case of unrestricted state space, such conditions provide Lagrange multipliers in the form of so-called co-states, that is, solutions to a suitable adjoint system which satisfy a certain transversality condition. Among various applications of necessary optimality conditions is the deduction of  further regularity properties for minimizers which, a priori, would just be absolutely continuous.
 
 When state constraints are present, a large body of results provide adaptations of the Pontryagin Principle by introducing appropriate corrections in the adjoint system. The price to pay for such extensions usually consists in a reduced regularity of optimal trajectories which, due to constraint reactions, turn out to be just Lipschitz continuous with associated co-states of bounded variation,  see \cite{fran}.
 
 The maximum principle under state constraints was first established by Dubovitskii and Milyutin \cite{dubovitskii1964extremum} (see also the monograph \cite{3v} for different forms of maximum principle under state constraints). It may happen that the maximum principle is degenerate and does not yield much information (abnormal maximum principle). As explained in  \cite{bettiol2007normality, bettiol2016normality, frankowska2006regularity, frankowska2009normality} in various contexts, the so-called ``inward pointing condition" generally ensures the normality of the maximum principle under state constraints. In our setting (calculus of variation problem, with constraints on positions but not on velocities), this will never be an issue. The maximum principle under state constraints generally involves an adjoint state which is the sum of a $W^{1,1}$ map and a map of bounded variation. This latter mapping may be very irregular and have infinitely many jumps \cite{milyutin2000certain}, which allows for discontinuities in optimal controls. However, under suitable assumptions (requiring regularity of the data and the affine dynamics with respect to controls), it has been shown that optimal controls and the corresponding adjoint state are continuous, and even Lipschitz continuous: see the seminal work by Hager \cite{hager1979lipschitz} (in the convex setting) and the subsequent contributions by Malanowski \cite{malanowski1978regularity} and Galbraith and Vinter \cite{galbraith2003lipschitz} (in much more general frameworks). Generalization to less smooth frameworks can also be found in \cite{bettiol2008holder, frankowska2006regularity}.\\
 Let $\Omega\subset \mathbb{R}^n $ be a bounded open domain with $C^2$ boundary.
 Let $\Gamma$ be the metric subspace of $AC(0,T;\mathbb{R}^n)$ defined by
 \begin{equation*}
 \Gamma=\Big\{\gamma\in AC(0,T;\mathbb{R}^n):\ \gamma(t)\in\overline{\Omega},\ \ \forall t\in[0,T]\Big\},
 \end{equation*}
 with the uniform metric.
 For any $x\in\overline{\Omega}$, we set
 \begin{equation*}
 \Gamma[x]=\left\{\gamma\in\Gamma: \gamma(0)=x\right\}.
 \end{equation*}
We  consider the problem of minimizing the classical functional of the calculus of variations
 \begin{equation*}
  J[\gamma]=\int_{0}^T f(t,\gamma(t),\dot{\gamma}(t)) \,dt + g(\gamma(T)).
 \end{equation*}
Let $U\subset \mathbb{R}^n$ be an open set such that $\overline{\Omega}\subset U$.
Given $ x\in \overline \Omega$, we consider the constrained minimization problem 
\begin{equation}\label{M1}
\inf_{\gamma\in\Gamma[ x]}J[\gamma],  \ \ \ \ \mbox{where}  \ \ \ \ \
J[\gamma]=\Big\{\int_{0}^T f(t,\gamma(t),\dot{\gamma}(t)) \,dt + g(\gamma(T))\Big\},
\end{equation} 
where $f: [0,T]\times U\times \mathbb{R}^n\rightarrow \mathbb{R}$ and $g:U\rightarrow \mathbb{R}$ .
In this paper, we obtain a certain formulation of the necessary optimality conditions for the above problem, which are particularly useful to study the regularity of minimizers.
More precisely, given a minimizer $\gamma^\star \in \Gamma[x]$ of \eqref{M1}, we prove that
there exists a Lipschitz continuous arc $p:[0,T]\rightarrow \mathbb{R}^n$ such that 
\begin{align}\label{sis}
\begin{cases}
\dot{\gamma^\star}(t)=-D_pH(t,\gamma^\star(t),p(t)) \ \ \ \ & \text{for all} \ t \in [0,T]\\
\dot{p}(t)=D_xH(t,\gamma^\star(t),p(t))-\Lambda(t,\gamma^\star,p)D\d(\gamma^\star(t)) &\text{for a. e.} \ t \in [0,T]\\
\end{cases}
\end{align}
where $\Lambda$ is a bounded continuous function independent of $\gamma^\star$ and $p$ (Theorem \ref{51}). By the above necessary conditions we derive a sort of maximal regularity, showing that any solutions $\gamma^\star$ is of class $C^{1,1}$. As is customary in this kind of problems, the proof relies on the analysis of suitable penalized functional which has the following form:
\begin{equation*}
\inf_{\tiny\begin{array}{c}
	\gamma\in AC(0,T;\mathbb{R}^n)\\
	\gamma(0)=x
	\end{array}}  \left\{\int_{0}^T \Big[f(t,\gamma(t),\dot{\gamma}(t)) +\frac{1}{\epsilon}\ d_{{\Omega}}(\gamma(t))\Big]\,dt+ \frac{1}{\delta}\ d_{{\Omega}}(\gamma(T))+ g(\gamma(T))\right\}.
\end{equation*}
Then, we show that the solutions of the penalized problem remain in $\overline{\Omega}$ (Lemma\ref{lemma5}).\\
A direct consequence of our necessary conditions is the Lipschitz regularity of the value function associated to \eqref{M1} (Proposition \ref{fw}).\\
Our interest is also motivated by application to mean field games, as we explain below. Mean field games (MFG) theory has been developed simultaneously by Lasry and Lions (\cite{8}, \cite{9}, \cite{10}) and by Huang, Malham\'{e} and Caines (\cite{h1}, \cite{h2}) in order to study  differential games with an infinite number of rational players in competition. The simplest MFG model leads to systems of partial differential equations involving two unknown functions: the value function $u$ of an optimal control problem of a typical player and the density $m$ of population of players. In the presence of state constraints, the usual construction of solutions to the MFG system has to be completely revised because the minimizers of the problem lack many of the good properties of the unconstrained case. Such constructions are discussed in detail in \cite{cc}, where a relaxed notion of solution to the constrained MFG problem was introduced following the so-called Lagrangian formulation (see \cite{bb}, \cite{bc}, \cite{bcs}, \cite{b}, \cite{pc}, \cite{cms}).
In this paper, applying our necessary conditions, we deduce the existence of more regular solutions than those constructed in \cite{cc}, assuming the data be Lipschitz continuous.\\
This paper is organised as follows. In Section 2 we introduce the notation and recall preliminary results. In Section 3, under suitable assumptions we prove the necessary conditions for constrained problem. Moreover, $C^{1,1}$-smoothness of minimizers of our problem is derived. In Section 4, we apply our necessary conditions to prove the Lipschitz regularity of the value function for constrained problem. Furthermore, we deduce the existence of more regular constrained MFG equilibria. Finally, in Appendix we prove a technical result on limiting subdifferential.
  \section{Preliminaries}
  Throughout this paper we denote by $|\cdot|$ and $\langle  \cdot  \rangle$ , respectively, the Euclidean norm and scalar product in $\mathbb{R}^n$. Let $A\in\mathbb{R}^{n\times n}$ be a matrix. We denote by $||\cdot||$ the norm of $A$ defined as follows
  $$
  ||A||=\max_{x\in \mathbb{R}^n, \; |x|=1} ||Ax|| \ .
  $$
 For any subset $S \subset \mathbb{R}^n$, $\overline{S}$ stands for its closure, $\partial S$ for its boundary, and $S^c$ for $\mathbb{R}^n\setminus S$. We denote by $\mathbf{1}_{S}:\mathbb{R}^n\rightarrow \{0,1\}$ the characteristic function of $S$, i.e.,
  \begin{align*}
  \mathbf{1}_{S}(x)=
  \begin{cases}
  1  \ \ \ &x\in S,\\
  0 &x\in S^c.
  \end{cases}
  \end{align*}
  We write $AC(0,T;\mathbb{R}^n)$ for the space of all absolutely continuous $\mathbb{R}^n$-valued functions on $[0,T]$, equipped with the uniform norm $||\gamma||_\infty ={\rm sup}_{[0,T]}\ |\gamma(t)|$. We observe that $AC(0,T;\mathbb{R}^n)$ is not a Banach space.\\
  Let $U$ be an open subset of $\mathbb{R}^n$. $C(U)$ is the space of all continuous functions on $U$ and $C_b(U)$ is the space of all bounded continuous functions on $U$. $C^k(U)$ is the space of all functions $\phi:U\rightarrow\mathbb{R}$ that are k-times continuously differentiable. Let $\phi\in C^1(U)$. The gradient vector of $\phi$ is denoted by $D\phi=(D_{x_1}\phi,\cdots , D_{x_n}\phi)$, where $D_{x_i}\phi =\frac{\partial \phi}{\partial x_i}$. Let $\phi \in C^k(U)$ and let $\alpha=(\alpha_1,\cdots,\alpha_n) \in \mathbb{N}^n$ be a multiindex. We define $D^\alpha \phi=D^{\alpha_1}_{x_1}\cdots D^{\alpha_n}_{x_n}\phi$.
$C^k_b(U)$ is the space of all function $\phi\in C^k(U)$ and such that
  $$
  \|\phi\|_{k,\infty}:=\sup_{\tiny\begin{array}{c}
  	x\in U\\ 
  	|\alpha|\leq k
  	\end{array}} |D^\alpha\phi(x)|<\infty
  $$
  Let $\Omega$ be a bounded open subset of $\mathbb{R}^n$ with $C^2$ boundary.  $C^{1,1}(\overline{\Omega})$ is the space of all the functions $C^1$ in a neighborhood $U$ of $\Omega$ and with locally Lipschitz continuous first order derivates in $U$.\\
  The distance function from  $\overline{\Omega}$ is the function $d_\Omega :\mathbb{R}^n \rightarrow [0,+ \infty[$ defined by
  \begin{equation*}
  d_\Omega(x):= \inf_{y \in \overline{\Omega}} |x-y| \ \ \ \ \ (x\in \mathbb{R}^n).
  \end{equation*}
  We define the oriented boundary distance from $\partial \Omega$ by
  \begin{equation*}
  \d(x)=d_\Omega(x) -d_{\Omega^c}(x) \ \ \ \ (x\in\mathbb{R}^n).
  \end{equation*}
  We recall that, since the boundary of $\Omega$ is of class $C^2$, there exists $\rho_0>0$ such that
  \begin{equation}\label{dn}
  \d(\cdot)\in C^2_b \ \ \text{on} \ \ \Sigma_{\rho_0}=\Big\{y\in B(x,\rho_0): x\in \partial\Omega\Big\}.
  \end{equation}
  Throughout the paper, we suppose that $\rho_0$ is fixed so that \eqref{dn} holds.\\
  Take a continuous function $f: \mathbb{R}^n\rightarrow \mathbb{R}$ and a point $x\in \mathbb{R}^n$. A vector $p\in\mathbb{R}^n$ is said to be a proximal subgradient of $f$ at $x$ if there exists $\epsilon>0$ and $C\geq 0$ such that
  \begin{equation*}
  p\cdot(y-x)\leq f(y)-f(x) + C|y-x|^2 \ \ \mbox{for all}\ y \ \mbox{that satisfy} \ |y-x|\leq \epsilon.
  \end{equation*}
  The set of all proximal subgradients of $f$ at $x$ is called the proximal subdifferential of $f$ at $x$ and is denoted by $\partial^p f(x)$.
  A vector $p\in\mathbb{R}^n$ is said to be a limiting subgradient of $f$ at $x$ if there exist sequences $x_i\in \mathbb{R}^n$, $p_i\in \partial^p f(x_i)$ such that $x_i\rightarrow x$ and $p_i\rightarrow p$ ($i\rightarrow \infty$).\\
  The set of all limiting subgradients of $f$ at $x$ is called the limiting subdifferential and is denoted by $\partial f(x)$.
  In particular, for the distance function we have the following result.
  \begin{lemma}\label{lemmaapp}
  	Let $\Omega$ be a bounded open subset of $\mathbb{R}^n$ with $C^2$ boundary. Then, for every $x\in \mathbb{R}^n$ it holds
  	\begin{align*}
  	\partial^p d_\Omega(x)=\partial d_\Omega(x)=
  	\begin{cases}
  	D\d(x) \ \ \ \ \ &0<\d(x)<\rho_0,\\
  	D\d(x)[0,1] &x\in\partial\Omega,\\
  	0 &x\in\Omega,
  	\end{cases}
  	\end{align*}
  where $\rho_0$ is as in \eqref{dn}.
  \end{lemma}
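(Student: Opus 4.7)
The argument splits naturally according to the location of $x$ relative to $\Omega$.

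The two easy cases come first. If $x\in\Omega$, then $d_\Omega\equiv 0$ in a neighborhood of $x$, hence $d_\Omega$ is smooth at $x$ and both $\partial^p d_\Omega(x)$ and $\partial d_\Omega(x)$ reduce to $\{0\}$. If $0<b_\Omega(x)<\rho_0$, then $x$ lies in the outer part of the tubular neighborhood $\Sigma_{\rho_0}$, where $d_\Omega=b_\Omega$, and the latter is $C^2$ by \eqref{dn}; this forces both subdifferentials to collapse to the singleton $\{Db_\Omega(x)\}$.

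The real content is the boundary case $x\in\partial\Omega$, where $d_\Omega(x)=0$ and $Db_\Omega(x)$ is the outward unit normal. I would first prove the equality $\partial^p d_\Omega(x)=Db_\Omega(x)[0,1]$ by a direct Taylor-type argument based on the identity $d_\Omega(y)=\max\{b_\Omega(y),0\}$ valid for $y$ close to $x$, together with the expansion $b_\Omega(y)=Db_\Omega(x)\cdot(y-x)+O(|y-x|^2)$ coming from \eqref{dn}. For the inclusion $\supset$, given $\lambda\in[0,1]$ and $p=\lambda Db_\Omega(x)$, I would verify the proximal inequality separately for $y\in\overline{\Omega}$ (where $d_\Omega(y)=0$ and $Db_\Omega(x)\cdot(y-x)\le C|y-x|^2$ since $b_\Omega(y)\le 0$) and for $y\notin\overline{\Omega}$ (where $d_\Omega(y)=b_\Omega(y)$ and one uses $\lambda\le 1$ together with the quadratic remainder). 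For the inclusion $\subset$, given $p\in\partial^p d_\Omega(x)$, I would test against $y=x+tv$ with $v\cdot Db_\Omega(x)\le 0$ and $t\downarrow 0$ to obtain $p\cdot v\le 0$ on the whole inward half-space, which forces $p=\lambda Db_\Omega(x)$ for some $\lambda\ge 0$; then testing against $y=x+t\,Db_\Omega(x)$ with $t\downarrow 0$ gives $\lambda\le 1$.

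It remains to show that the limiting subdifferential adds nothing new at boundary points, i.e.\ $\partial d_\Omega(x)\subset\partial^p d_\Omega(x)$. For this I would pick sequences $x_i\to x$ and $p_i\in\partial^p d_\Omega(x_i)$ with $p_i\to p$. By the three cases already handled, each $p_i$ has the form $\lambda_i Db_\Omega(x_i)$ for some $\lambda_i\in[0,1]$ (with $\lambda_i=0$ if $x_i\in\Omega$, $\lambda_i=1$ if $b_\Omega(x_i)>0$, and $\lambda_i\in[0,1]$ arbitrary if $x_i\in\partial\Omega$). Extracting a convergent subsequence of $\{\lambda_i\}$ and using the continuity of $Db_\Omega$ on $\Sigma_{\rho_0}$ gives $p\in Db_\Omega(x)[0,1]$, which equals $\partial^p d_\Omega(x)$ by the previous step.

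The main obstacle is the verification of the proximal inequality at boundary points for a general multiplier $\lambda\in[0,1]$: the distance function fails to be smooth across $\partial\Omega$, so one must keep the two sides separated and extract the quadratic remainder carefully from the $C^2$ regularity of $b_\Omega$. Once this is in place, the other inclusions and the passage from proximal to limiting subdifferential follow routinely.
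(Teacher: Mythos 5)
Your proposal is correct and follows essentially the same route as the paper's Appendix: reduce to the boundary case, exploit the $C^2$ regularity of $b_\Omega$ near $\partial\Omega$ to identify $\partial^p d_\Omega(x)$ with $Db_\Omega(x)[0,1]$ via the two inclusions, and then pass to the limiting subdifferential by a compactness/continuity argument. The only cosmetic differences are that the paper obtains the inclusion $\supset$ by first showing $Db_\Omega(x)\in\partial^p d_\Omega(x)$ and then scaling by $\lambda\in[0,1]$, and obtains $\subset$ by testing against points of $\Omega^c$ and of $\Omega$ and invoking the contingent cones $T_{\Omega^c}(x)$ and $T_{\overline\Omega}(x)$, where you test directly along segments $x+tv$.
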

  \noindent
  The proof is given in the Appendix.\\
  Let $X$ be a separable metric space. $C_b(X)$ is the space of all bounded continuous functions on $X$. We denote by $\mathscr{B}(X)$ the family of the Borel subset of $X$ and by $\mathcal{P}(X)$ the family of all Borel probability measures on $X$.
  The support of $\eta \in \mathcal{P}(X)$, $supp(\eta)$, is the closed set defined by
  \begin{equation*}
  supp (\eta) := \Big \{x \in X: \eta(V)>0\ \mbox{for each neighborhood V of $x$}\Big\}.
  \end{equation*}
  We say that a sequence $(\eta_i)\subset \mathcal{P}(X)$ is narrowly convergent to $\eta \in \mathcal{P}(X)$ if
  \begin{equation*}
  \lim_{i\rightarrow \infty} \int_X f(x)\,d\eta_i(x)=\int_X f(x) \,d\eta \ \ \ \ \forall f \in C_b(X).
  \end{equation*}
  We denote by $d_1$ the Kantorovich-Rubinstein distance on $X$, which---when $X$ is compact---can be characterized as follows 
  \begin{equation}\label{2.7}
  d_1(m,m')=sup\Big\{\int_X f(x)\,dm(x)-\int_X f(x)\,dm'(x)\ \Big|\ f:X\rightarrow\mathbb{R}\ \ \mbox{is 1-Lipschitz} \Big\}, 
  \end{equation}
  for all $m, m'\in\mathcal{P}(X)$.\\
  Let $\Omega$ be a bounded open subset of $\mathbb{R}^n$ with $C^2$ boundary. We write $\L(0,T;\mathcal{P}(\overline{\Omega}))$ for the space of all maps $m:[0,T]\rightarrow \mathcal{P}(\overline{\Omega})$ that are Lipschitz continuous with respect to $d_1$, i.e.,
  \begin{equation}\label{lipm}
  d_1(m(t),m(s))\leq C|t-s|, \ \ \ \ \forall t,s\in[0,T],
  \end{equation}
  for some constant $C\geq 0$. We denote by $\L(m)$ the smallest constant that verifies \eqref{lipm}.\\
\section{Necessary conditions and smoothness of minimizers}\label{sec3}
 \subsection{Assumptions and main result}
Let $\Omega \subset \mathbb{R}^n$ be a bounded open set with $C^2$ boundary. Let $\Gamma$ be the metric subspace of $AC(0,T;\mathbb{R}^n)$ defined by
  \begin{equation*}
  \Gamma=\Big\{\gamma\in AC(0,T;\mathbb{R}^n):\ \gamma(t)\in\overline{\Omega},\ \ \forall t\in[0,T]\Big\}.
  \end{equation*}
  For any $x\in\overline{\Omega}$, we set
  \begin{equation*}
\Gamma[x]=\left\{\gamma\in\Gamma: \gamma(0)=x\right\}.
  \end{equation*}
 Let $U\subset \mathbb{R}^n$ be an open set such that $\overline{\Omega}\subset U$.
 Given $ x\in \overline \Omega$, we consider the constrained minimization problem 
 \begin{equation}\label{M}
\inf_{\gamma\in\Gamma[ x]}J[\gamma],  \ \ \ \ \mbox{where}  \ \ \ \ \
  J[\gamma]=\Big\{\int_{0}^T f(t,\gamma(t),\dot{\gamma}(t)) \,dt + g(\gamma(T))\Big\}.
 \end{equation}
 We denote by $\mathcal{X}[x]$ the set of solutions of \eqref{M}, that is
 \begin{equation*}
 \mathcal{X}[x]=\Big\{\gamma^\star\in \Gamma[x]: J[\gamma^\star]=\inf_{\Gamma[x]}J[\gamma]\Big\}.
 \end{equation*}
 We assume that $f: [0,T]\times U\times \mathbb{R}^n\rightarrow \mathbb{R}$ and $g:U\rightarrow \mathbb{R}$ satisfy the following conditions.
 \begin{enumerate}
 	\item[(g1)] $g\in C^1_b(U)$
 	\item[(f0)] $f\in C\big([0,T]\times U\times \mathbb{R}^n\big)$ and for all $t\in[0,T]$ the function  $(x,v)\longmapsto f(t,x,v)$ is differentiable. Moreover, $D_xf$, $D_vf$ are continuous on $[0,T]\times U\times \mathbb{R}^n$ 
 	and there exists a constant $M\geq 0$ such that
 	\begin{equation}
 	|f(t,x,0)|+|D_xf(t,x,0)|+|D_vf(t,x,0)|\leq M \ \ \ \ \forall\ (t,x)\in [0,T]\times U.\label{bm}
 	\end{equation}
 	\item[(f1)] For all $t\in[0,T]$ the map $(x,v)\longmapsto D_{v}f(t,x,v)$ is continuously differentiable and there exists a constant $\mu\geq 1$ such that
 	\begin{align}
 	&\frac{I}{\mu} \leq D^2_{vv}f(t,x,v)\leq I\mu,\label{f2}\\
 	&||D_{vx}^2f(t,x,v)||\leq \mu(1+|v|), \label{fvx}
 	\end{align}
 	for all $(t,x,v)\in [0,T]\times U\times \mathbb{R}^n$, where $I$ denotes the identity matrix.
 	\item[(f2)] For all $(x,v)\in U\times\mathbb{R}^n$ the function $t\longmapsto f(t,x,v)$ and the map $t\longmapsto D_vf(t,x,v)$ are Lipschitz continuous. Moreover, there exists a constant $\kappa\geq 0$ such that
 	\begin{align}
 	&|f(t,x,v)-f(s,x,v)|\leq \kappa(1+|v|^2)|t-s|\label{lf1}\\
 	&|D_vf(t,x,v)-D_vf(s,x,v)|\leq \kappa(1+|v|)|t-s|\label{fvt}
 	\end{align} 
 	for all $t$, $s\in [0,T]$, $x\in  U$, $v\in\mathbb{R}^n$.
 \end{enumerate}
 \begin{remark}
 	By classical results in the calculus of variation (see, e.g., \cite[Theorem 11.1i]{c}), there exists at least one minimizer of \eqref{M} in $\Gamma$ for any fixed point $x\in\overline{\Omega}$.
 \end{remark}
 \noindent
 In the next lemma we show that (f0)-(f2) imply the useful growth conditions for $f$ and for its derivatives.
  \begin{lemma}\label{r3}
 	Suppose that (f0)-(f2) hold. Then, there exists a positive constant $C(\mu,M)$ depending only on $\mu$ and $M$ such that
 	\begin{align}
 	&|D_vf(t,x,v)|\leq C(\mu,M)(1+|v|)\label{l4},\\
 	&|D_xf(t,x,v)|\leq C(\mu,M)(1+|v|^2)\label{lx},\\
 	&\frac{1}{4\mu}|v|^2-C(\mu,M)\leq f(t,x,v)\leq 4\mu|v|^2 +C(\mu,M)\label{l3},
 	\end{align}
 	for all $(t,x,v)\in [0,T]\times U\times \mathbb{R}^n$.
 \end{lemma}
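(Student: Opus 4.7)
The plan is to derive all three bounds from the standard Taylor/fundamental theorem of calculus representation of $f$ and its derivatives, using the pointwise bound at $v=0$ provided by (f0) together with the Hessian estimates from (f1). Throughout, $(t,x,v)$ will be fixed and $C(\mu,M)$ will denote a generic constant that may change from line to line.

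First I would prove \eqref{l4}. Writing
\[
D_vf(t,x,v)=D_vf(t,x,0)+\int_0^1 D^2_{vv}f(t,x,sv)\,v\,ds,
\]
the first term is bounded by $M$ and the integral is bounded in norm by $\mu|v|$ thanks to \eqref{f2}, so $|D_vf(t,x,v)|\le M+\mu|v|\le C(\mu,M)(1+|v|)$. For \eqref{lx} the same trick works with $D_xf$ in place of $f$: since $f\in C^2$ in $v$ and $D_vf\in C^1$ in $(x,v)$, the mixed partial $D^2_{xv}f=D^2_{vx}f$ is continuous, so
\[
D_xf(t,x,v)=D_xf(t,x,0)+\int_0^1 D^2_{xv}f(t,x,sv)\,v\,ds;
\]
using \eqref{fvx} the integrand is bounded by $\mu(1+s|v|)|v|$ and one integrates to obtain $M+\mu|v|+\tfrac{\mu}{2}|v|^2$, which is dominated by $C(\mu,M)(1+|v|^2)$ after applying $|v|\le\tfrac12(1+|v|^2)$.

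For the quadratic bound \eqref{l3} I would use the second-order Taylor formula with integral remainder in the $v$-variable,
\[
f(t,x,v)=f(t,x,0)+D_vf(t,x,0)\cdot v+\int_0^1(1-s)\,\langle D^2_{vv}f(t,x,sv)v,v\rangle\,ds.
\]
By (f0) the first two terms are bounded in absolute value by $M+M|v|$, and by the two-sided bound \eqref{f2} the remainder lies between $\tfrac{1}{2\mu}|v|^2$ and $\tfrac{\mu}{2}|v|^2$ (using $\int_0^1(1-s)\,ds=\tfrac12$). Absorbing the linear term through Young's inequality, namely $M|v|\le \tfrac{1}{4\mu}|v|^2+\mu M^2$, yields the lower bound $f(t,x,v)\ge \tfrac{1}{4\mu}|v|^2-C(\mu,M)$; an analogous absorption into $\tfrac{\mu}{2}|v|^2$ gives the upper bound, which is then trivially dominated by $4\mu|v|^2+C(\mu,M)$.

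There is no real obstacle here: assumption (f2) is not used in this lemma (it concerns only the $t$-dependence), and the whole argument is a direct application of the fundamental theorem of calculus combined with the uniform bounds at $v=0$ and the Hessian control. The only minor care needed is in \eqref{lx}, where one must invoke the symmetry of mixed partials (justified because $D_vf$ is $C^1$ in $(x,v)$ by (f1)) in order to rewrite $D_xf(t,x,v)-D_xf(t,x,0)$ as an integral of $D^2_{vx}f$ along the segment $[0,v]$.
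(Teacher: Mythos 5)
Your proof is correct and follows essentially the same route as the paper: Taylor/fundamental-theorem expansions in $v$ around $v=0$, combined with the bounds at $v=0$ from (f0) and the Hessian bounds from (f1), with only cosmetic differences (integral versus Lagrange remainder, and your explicit mention of the symmetry of mixed partials, which the paper uses implicitly). You are also right that (f2) plays no role here.
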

 \begin{proof}
 	By \eqref{bm}, and by \eqref{f2} one has that
 	\begin{align*}
 	|D_v f(t,x,v)|&\leq |D_vf(t,x,v)-D_vf(t,x,0)| +|D_vf(t,x,0)|\\
 	&\leq\int_0^1 \big| D_{vv}^2f(t,x,\tau v)\big||v|\,d\tau+ |D_vf(t,x,0)|\leq \mu|v|+M\leq C(\mu,M)(1+|v|)
 	\end{align*}
 	and so \eqref{l4} holds. Furthermore, by \eqref{bm}, and by \eqref{fvx} we have that
 	\begin{align*}
 	&|D_xf(t,x,v)|\leq |D_xf(t,x,v)-D_xf(t,x,0)|+|D_xf(t,x,0)|\leq \int_0^1 \big|D_{xv}^2f(t,x,\tau v)\big||v| \,d \tau +M\\
 	&\leq \mu(1+|v|)|v|+M \leq C(\mu,M)(1+|v|^2).
 	\end{align*}
 	Therefore, \eqref{lx} holds. Moreover, fixed $v\in \mathbb{R}^n$ there exists a point $\xi$ of the segment with endpoints $0$, $v$ such that
 	\begin{equation*}
 	f(t,x,v)=f(t,x,0) + \langle D_vf(t,x,0),v\rangle +\frac{1}{2} \langle D_{vv}^2f(t,x,\xi)v,v\rangle .
 	\end{equation*} 
 	By \eqref{bm}, \eqref{f2}, and by \eqref{l4} we have that
 	\begin{align*}
 	&	-C(\mu,M)+\frac{1}{4\mu}|v|^2\leq-M-C(\mu,M)|v|+\frac{1}{2\mu}|v|^2\leq f(t,x,v)\leq M+C(\mu,M)|v|+\frac{\mu}{2}|v|^2\\
 	&\leq C(\mu,M)+4\mu|v|^2,
 	\end{align*}  
 	and so \eqref{l3} holds. This completes the proof.
 \end{proof}
\noindent
 In the next result we show a special property of the minimizers of \eqref{M}.
 \begin{lemma}\label{lb1}
 	For any $x\in\overline{\Omega}$ and for any $\gamma^\star\in\mathcal{X}[x]$ we have that
 	\begin{equation*}
 	\int_0^T\frac{1}{4\mu}|\dot{\gamma}^\star(t)|^2\,dt\leq K,
 	\end{equation*}
 	where 
 	\begin{equation}\label{k} K:=T\Big(C(\mu,M)+M\Big)+2\max_{U}|g(x)|.
 	\end{equation}
 \end{lemma}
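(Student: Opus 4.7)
The plan is to exploit optimality by comparing $\gamma^\star$ with the simplest admissible competitor, namely the constant curve $\gamma_0(t)\equiv x$, which belongs to $\Gamma[x]$ because $x\in\overline{\Omega}$. Since $\gamma^\star$ is a minimizer, $J[\gamma^\star]\le J[\gamma_0]$, and the whole statement will follow by bounding the right-hand side from above using assumption (f0) and the left-hand side from below using the quadratic coercivity provided by Lemma \ref{r3}.

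First I would evaluate $J[\gamma_0]$: since $\dot\gamma_0\equiv 0$, the bound $|f(t,x,0)|\le M$ from (f0) gives
\[
J[\gamma_0]=\int_0^T f(t,x,0)\,dt+g(x)\le TM+\max_{U}|g|.
\]
Next, I would estimate $J[\gamma^\star]$ from below by applying the left inequality of \eqref{l3} pointwise in $t$, which yields
\[
J[\gamma^\star]\ge \int_0^T\!\Big(\tfrac{1}{4\mu}|\dot\gamma^\star(t)|^2-C(\mu,M)\Big)\,dt+g(\gamma^\star(T)).
\]

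Combining the two inequalities through $J[\gamma^\star]\le J[\gamma_0]$ and rearranging,
\[
\int_0^T\tfrac{1}{4\mu}|\dot\gamma^\star(t)|^2\,dt\le T\bigl(C(\mu,M)+M\bigr)+\max_{U}|g|-g(\gamma^\star(T))\le T\bigl(C(\mu,M)+M\bigr)+2\max_{U}|g|,
\]
which is exactly the claim with $K$ as in \eqref{k}. There is no real obstacle here: the only subtle point is recognizing that admissibility of the constant curve is guaranteed precisely by the fact that $x\in\overline{\Omega}$ and that $\overline{\Omega}\subset U$ ensures $g(x)$ and $f(t,x,0)$ are well-defined; beyond that the argument is a one-line comparison combined with the coercivity estimate from the previous lemma.
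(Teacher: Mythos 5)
Your proposal is correct and follows exactly the paper's own argument: compare $J[\gamma^\star]$ with the cost of the constant trajectory $\gamma(t)\equiv x$, bound the latter via \eqref{bm} and $\max_U|g|$, and bound the former from below via the coercivity estimate \eqref{l3}. No differences worth noting.
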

 \begin{proof}
 	Let $x\in\overline{\Omega}$ and let $\gamma^\star\in \mathcal{X}[x]$. By comparing the cost of $\gamma^\star$ with the cost of the constant trajectory $\gamma^\star(t)\equiv x$, one has that
 	\begin{align}\label{2}
 	&\int_0^T f(t,\gamma^\star(t), \dot{\gamma}^\star(t)) \, dt + g(\gamma^\star(T))\leq \int _0^T f(t,x,0) \,dt + g(x)\\
 	&\leq T\max_{[0,T]\times U}|f(t,x,0)|+\max_{U}|g(x)|.\nonumber
 	\end{align}
 	Using \eqref{bm} and \eqref{l3} in (\ref{2}), one has that
 	\begin{equation*}
 	\int_0^T\frac{1}{4\mu}|\dot{\gamma}^\star(t)|^2\,dt\leq K,
 	\end{equation*}
 	where 
 	\begin{equation*} K:=T\Big(C(\mu,M)+M\Big)+2\max_{U}|g(x)|.
 	\end{equation*}
 \end{proof}
\noindent
We denote by $H:[0,T]\times U\times\mathbb{R}^n \rightarrow \mathbb{R}$ the Hamiltonian 
 \begin{equation*}
 H(t,x,p)=\sup_{v\in \mathbb{R}^n} \Big\{ -\langle p,v\rangle - f(t,x,v)\Big\},\qquad \forall \ (t,x,p)\in [0,T]\times U\times \mathbb{R}^n.
 \end{equation*}
Our assumptions on $f$ imply that $H$ satisfies the following conditions.
 	\begin{enumerate}
 	\item[(H0)] $H\in C\big([0,T]\times U\times \mathbb{R}^n\big)$ and for all $t\in[0,T]$ the function  $(x,p)\longmapsto H(t,x,p)$ is differentiable.  Moreover, $D_xH$, $D_pH$ are continuous on $[0,T]\times U\times \mathbb{R}^n$ and there exists a constant $M'\geq 0$ such that
 		\begin{equation}
 		|H(t,x,0)|+|D_xH(t,x,0)|+|D_pH(t,x,0)|\leq M' \ \ \ \ \forall\ (t,x)\in [0,T]\times U.
 		\end{equation}
 	 \item[(H1)] For all $t\in[0,T]$ the map $(x,p)\longmapsto D_{p}H(t,x,p)$ is continuously differentiable and 
 	 \begin{align}
 	 &\frac{I}{\mu} \leq D^2_{pp}H(t,x,p)\leq I\mu,\label{h1b}\\
 	 &||D_{px}^2H(t,x,p)||\leq C(\mu,M')(1+|p|), \label{40}
 	 \end{align}
 	 for all $(t,x,p)\in [0,T]\times U\times \mathbb{R}^n$, where $\mu$ is the constant given in (f1) and $C(\mu,M')$ depends only on $\mu$ and $M'$.
 	  \item[(H2)] For all $(x,p)\in U\times\mathbb{R}^n$ the function $t\longmapsto H(t,x,p)$ and the map $t\longmapsto D_pH(t,x,p)$ are Lipschitz continuous. Moreover 
 	 \begin{align}
 	 &|H(t,x,p)-H(s,x,p)|\leq \kappa C(\mu,M')(1+|p|^2)|t-s|,\\
 	 &|D_pH(t,x,p)-D_pH(s,x,p)|\leq \kappa C(\mu,M')(1+|p|)|t-s|,\label{41}
 	 \end{align} 
 	 for all $t$, $s\in [0,T]$, $x\in  U$, $p\in\mathbb{R}^n$, where $\kappa$ is the constant given in (f2) and $C(\mu,M')$ depends only on $\mu$ and $M'$.
 	\end{enumerate}
 	\begin{remark}
 	Arguing as in Lemma \ref{r3} we deduce that
 	\begin{align}
 &|D_pH(t,x,p)|\leq C(\mu,M')(1+|p|),\label{312}\\
  &|D_xH(t,x,p)|\leq C(\mu,M')(1+|p|^2),\label{39}\\
   &	\frac{1}{4\mu}|p|^2-C(\mu,M')\leq H(t,x,p)\leq 4\mu|p|^2  +C(\mu,M'),\label{c0}
\end{align}
for all $(t,x,p)\in[0,T]\times U\times\mathbb{R}^n$ and $C(\mu,M')$ depends only on $\mu$ and $M'$.
 	\end{remark}
\noindent	
Under the above assumptions on $\Omega$, $f$ and $g$ our necessary conditions can be stated as follows.
 \begin{theorem}\label{51}
For any $x\in\overline{\Omega}$ and any $\gamma^\star \in\mathcal{X}[x]$ the following holds true.
 	\begin{enumerate}
 	\item[(i)] $\gamma^\star$ is of class $C^{1,1}([0,T];\overline{\Omega})$.
 \item[(ii)] There exist:
 	\begin{enumerate}
 	\item[(a)] a Lipschitz continuous arc $p:[0,T]\rightarrow \mathbb{R}^n$, 
 	\item[(b)]a constant $\nu\in\mathbb{R}$ such that
 	\begin{equation*}
 	0\leq\nu\leq \max\left\{1,2\mu \ \sup_{x\in U}\Big|D_pH(T,x,Dg(x))\Big|\right\},
 	\end{equation*}
 	 \end{enumerate}
 	 which satisfy the adjoint system
 	\begin{align}\label{sr}
 	\begin{cases}
 	\dot{\gamma}^\star=-D_pH(t,\gamma^\star,p) \ \ \ &\mbox{for all} \ t\in[0,T],\\
 	\dot p=D_xH(t,\gamma^\star,p)-\Lambda(t,\gamma^\star,p)\mathbf{1}_{\partial{\Omega}}(\gamma^\star) D\d(\gamma^\star) &\mbox{for a.e.}\ t\in[0,T],
 	\end{cases}
 	\end{align}
 	and the transversality condition
 	\begin{equation*}
 p(T)= D g(\gamma^\star(T))+ \nu D\d(\gamma^\star(T))\mathbf{1}_{\partial\Omega}(\gamma^\star(T)),
 	\end{equation*}
 	where $\Lambda:[0,T]\times\Sigma_{\rho_0}\times\mathbb{R}^n\rightarrow \mathbb{R}^n$ is a bounded continuous function independent of $\gamma^\star$ and $p$.
 	\end{enumerate}
 	Moreover, 
 	\begin{enumerate}
 	\item[(iii)] the following estimate holds
 	\begin{equation}\label{lstar}
 	||\dot{\gamma}^\star||_\infty\leq L^\star, \ \ \ \forall \gamma^\star\in \mathcal{X}[x],
 	\end{equation}
where $L^\star=L^\star(\mu,M',M,\kappa,T,||Dg||_\infty,||g||_\infty)$.
 	\end{enumerate}
 \end{theorem}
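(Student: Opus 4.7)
The plan is to adopt the penalization approach announced in the introduction: replace the state constraint by a penalty in the cost, derive necessary conditions for the resulting unconstrained problem, and then exploit the geometry of $\partial\Omega$ to rewrite the constraint multiplier as an explicit bounded feedback.

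For $\epsilon,\delta>0$ I would introduce the penalized functional
$$J^{\epsilon,\delta}[\gamma]=\int_0^T\Big[f(t,\gamma,\dot\gamma)+\tfrac{1}{\epsilon} d_\Omega(\gamma(t))\Big]\,dt+\tfrac{1}{\delta} d_\Omega(\gamma(T))+g(\gamma(T))$$
over $\gamma\in AC(0,T;\mathbb{R}^n)$ with $\gamma(0)=x$, obtain existence of a minimizer $\gamma_{\epsilon,\delta}$ by the Tonelli theorem using (f0)--(f2) and (g1), and invoke Lemma \ref{lemma5} to ensure $\gamma_{\epsilon,\delta}([0,T])\subset\overline{\Omega}$ for all sufficiently small $\epsilon,\delta$. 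Since the penalties vanish on $\overline{\Omega}$, a standard comparison shows that any $\gamma^\star\in\mathcal{X}[x]$ is itself a minimizer of $J^{\epsilon,\delta}$. Writing the unconstrained necessary conditions for the penalization with $p(t):=-D_vf(t,\gamma^\star,\dot\gamma^\star)$, the Fenchel identity gives $\dot\gamma^\star=-D_pH(t,\gamma^\star,p)$, while the limiting subdifferential formula from Lemma \ref{lemmaapp} rewrites the first variation as
\begin{align*}
\dot p(t)&=D_xH(t,\gamma^\star,p)-\mu(t)\,\mathbf{1}_{\partial\Omega}(\gamma^\star(t))\,D\d(\gamma^\star(t)),\qquad\text{a.e. in }[0,T],\\
p(T)&=Dg(\gamma^\star(T))+\nu\,\mathbf{1}_{\partial\Omega}(\gamma^\star(T))\,D\d(\gamma^\star(T)),
\end{align*}
for some measurable $\mu\geq 0$ and scalar $\nu\geq 0$.

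The core step is to identify $\mu$ as a feedback. On the contact set $E=\{t:\gamma^\star(t)\in\partial\Omega\}$ the constraint $\d(\gamma^\star(t))=0$ holds, so differentiating once in time yields the tangency $\langle D\d(\gamma^\star),D_pH(t,\gamma^\star,p)\rangle=0$ on $E$. Differentiating once more on interior points of $E$ and substituting the adjoint equation produces an algebraic equation in $\mu$ whose unique solution is
$$\Lambda(t,x,p)=\frac{\langle D\d(x),\partial_tD_pH-D^2_{xp}H\,D_pH+D^2_{pp}H\,D_xH\rangle-\langle D^2\d(x)D_pH,D_pH\rangle}{\langle D\d(x),D^2_{pp}H(t,x,p)\,D\d(x)\rangle}.$$
Since $|D\d|=1$ on $\partial\Omega$, (H1) bounds the denominator below by $1/\mu$, while (H0)--(H2) together with an a priori $L^\infty$ bound on $p$ (obtained by a Gronwall/Hamiltonian--conservation argument starting from Lemma \ref{lb1}) control the numerator, giving $\Lambda\in C_b([0,T]\times\Sigma_{\rho_0}\times\mathbb{R}^n)$ independent of the particular minimizer. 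An analogous endpoint computation at $t=T$ --- imposing that $\dot\gamma^\star(T)$ cannot point strictly outward and using the coercivity of $\nu\mapsto\langle D\d,D_pH(T,\gamma^\star(T),Dg+\nu D\d)\rangle$ --- yields the explicit bound on $\nu$ via \eqref{312}.

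With $\mu(t)=\Lambda(t,\gamma^\star,p)$ bounded on $E$ and $p$ bounded, the adjoint equation gives $\dot p\in L^\infty$, so $p$ is Lipschitz; feeding this back into $\dot\gamma^\star=-D_pH(t,\gamma^\star,p)$ and using (H1)--(H2) yields $\dot\gamma^\star\in\mathrm{Lip}([0,T];\mathbb{R}^n)$, proving (i), while carefully tracking the constants through the Gronwall estimate produces the explicit bound $L^\star$ in (iii). The main obstacle I anticipate is the second time-differentiation used to derive $\Lambda$: the contact set $E$ is a priori only closed and may be nowhere dense, so one has to bootstrap the Lipschitz regularity of $\dot\gamma^\star$ near interior points of $E$ from the EL system itself, together with the $C^2$-regularity of $\d$ granted by \eqref{dn}, in order to justify the pointwise identity $\mu=\Lambda$ on $E$ rather than settling for an inequality.
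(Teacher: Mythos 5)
Your architecture is the same as the paper's: penalize the constraint, apply a nonsmooth maximum principle to the penalized problem, show that penalized minimizers remain in $\overline\Omega$ so that every $\gamma^\star\in\mathcal{X}[x]$ inherits the penalized necessary conditions, and identify the multiplier as a feedback by differentiating $\d\circ\gamma^\star$ twice on the contact set; your formula for $\Lambda$ agrees with the one obtained in the paper's proof, and your endpoint argument for the bound on $\nu$ is sound (it even gives the sharper bound $\nu\le\mu\sup_{x}|D_pH(T,x,Dg(x))|$, versus the paper's $\nu=\beta/\delta\le\max\{1,2\mu N\}$). The first genuine gap is that you invoke Lemma \ref{lemma5} as if it were an external fact, whereas it is the technical core of the proof of Theorem \ref{51} itself: establishing it requires the costate estimate $|p(t)|^2\le 4\mu\big[\epsilon^{-1}d_\Omega(\gamma(t))+C_1\delta^{-2}\big]$ (derived from the near-conservation of $r(t)=H(t,\gamma,p)-\epsilon^{-1}d_\Omega(\gamma)$ together with the transversality condition), a separate lemma showing that for $\delta\le(2\mu N)^{-1}\wedge 1$ the slope of $t\mapsto d_\Omega(\gamma(t))$ at $t=T$ is nonpositive whenever $\gamma(T)\notin\overline\Omega$, and a contradiction argument bounding $1/(\mu\epsilon_k)$ by the second derivative of $d_\Omega\circ\gamma_k$ at the maximum of an excursion outside $\overline\Omega$. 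None of this is sketched, so the hardest step is assumed rather than proved; note also that the maximum principle for the penalized problem must be established \emph{before} this containment result, since the costate estimate feeds into it.

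The second gap is in the identification of the multiplier. Differentiating the constraint ``on interior points of $E$'' cannot work in general: the contact set $E=\{t:\gamma^\star(t)\in\partial\Omega\}$ may have empty interior and still carry positive Lebesgue measure, and the adjoint equation must be identified for a.e.\ $t\in E$, not merely on its interior. The resolution is measure-theoretic rather than a regularity bootstrap. The Lipschitz continuity of $p$ is already available from the penalized maximum principle with $\epsilon=\epsilon_1$ \emph{fixed} (the multiplier $\lambda(t)/\epsilon_1$ is bounded by $1/\epsilon_1$), so it should precede, not follow, the feedback identification. Then $\psi=\d\circ\gamma^\star$ is $C^{1,1}$ on the open set where $|\psi|<\rho_0$; the points of $E\cap(0,T)$ at which $\dot\psi\ne 0$ are isolated, hence $\dot\psi=0$ a.e.\ on $E$; and since $\dot\psi$ is absolutely continuous, its derivative vanishes a.e.\ on the level set $\{\dot\psi=0\}$, so $\ddot\psi=0$ a.e.\ on $E$. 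Solving $\ddot\psi=0$ for the multiplier then yields your expression for $\Lambda$ a.e.\ on $E$, while $\lambda=0$ off $E$ by \eqref{lambda}. (A last, minor, omission: the reduction of a general $U$ to $U=\mathbb{R}^n$ by a cut-off extension of $f$ and $g$ preserving (f0)--(f2) and (g1).)
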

 \noindent
The (feedback) function $\Lambda$ in \eqref{sr} can be computed explicitly, see Remark \ref{lambda1} below.
 \subsection{Proof of Theorem \ref{51} for $U=\mathbb{R}^n$}
 In this section we prove Theorem \ref{51} in the special case of $U=\mathbb{R}^n$. The proof for a general open set $U$ will be given in the next section.\\
The idea of proof is based on \cite[ Theorem 2.1]{ccc} where the Maximum Principle under state constraints is proved for a Mayer problem. The reasoning requires several intermediate steps.\\
Fix $x\in \overline{\Omega}$. The key point is to approximate the constrained problem by penalized problems as follows
 \begin{equation}\label{v1}
 \inf_{\tiny\begin{array}{c}
 	\gamma\in AC(0,T;\mathbb{R}^n)\\
 	\gamma(0)=x
 	\end{array}}  \left\{\int_{0}^T \Big[f(t,\gamma(t),\dot{\gamma}(t)) +\frac{1}{\epsilon}\ d_{{\Omega}}(\gamma(t))\Big]\,dt+ \frac{1}{\delta}\ d_{{\Omega}}(\gamma(T))+ g(\gamma(T))\right\}.
 \end{equation}
 Then, we will show that, for $\epsilon>0$ and $\delta\in (0,1]$ small enough, the solutions of the penalized problem remain in $\overline{\Omega}$.\\
 Observe that the Hamiltonian associated with the penalized problem is given by
 \begin{equation}\label{perth}
 	H_\epsilon(t,x,p) =
\sup_{v\in \mathbb{R}^n} \Big\{ -\langle p,v\rangle - f(t,x,v)-\frac{1}{\epsilon}\ d_{{\Omega}}(x)\Big\}  
 =  H(t,x,p)-\frac{1}{\epsilon}\ d_{{\Omega}}(x),
 \end{equation}
 for all $(t,x,p)\in [0,T]\times \mathbb{R}^n\times \mathbb{R}^n$.\\
 By classical results in the calculus of variation (see, e.g., \cite[Section 11.2]{c}), there exists at least one mimimizer of \eqref{v1} in $AC(0,T;\mathbb{R}^n)$ for any fixed initial point $ x\in\overline{\Omega}$. We denote by $\mathcal{X}_{\epsilon,\delta}[x]$ the set of solutions of (\ref{v1}).
\begin{remark}\label{r4}
 Arguing as in Lemma \ref{lb1} we have that, for any $x\in \overline{\Omega}$, all $\gamma\in\mathcal{X}_{\epsilon,\delta}[x]$ satisfy
 \begin{equation}
 \int_0^T\Big[\frac{1}{4\mu}|\dot{\gamma}(t)|^2+\frac{1}{\epsilon}\ d_{{\Omega}}(\gamma(t))\Big] \,dt\leq K,
 \end{equation}
 where $K$ is the constant given in \eqref{k}.
 \end{remark}
 \noindent
 The first step of the proof consists in showing that the solutions  of the penalized problem remain in a neighborhood of $\overline{\Omega}$. 
 \begin{lemma}\label{lemma3.1} Let $\rho_0$ be such that \eqref{dn} holds. For any $\rho\in(0,\rho_0]$, there exists $\epsilon(\rho)>0$ such that for all $\epsilon\in (0,\epsilon(\rho)]$ and all $\delta\in(0,1]$ we have that  
 	\begin{equation}\label{dist}
 	\forall \ x\in\overline{\Omega},\ \gamma\in\mathcal{X}_{\epsilon,\delta}[ x] \ \ \Longrightarrow\ \ \sup_{t\in[0,T]}d_{\Omega}(\gamma(t))\leq \rho. 
 	\end{equation}
 \end{lemma}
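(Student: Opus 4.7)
The plan is to exploit the two integral bounds on minimizers $\gamma\in\mathcal{X}_{\epsilon,\delta}[x]$ provided by Remark \ref{r4}: the kinetic energy bound $\int_0^T |\dot\gamma(t)|^2\,dt\leq 4\mu K$ and the penalty bound $\int_0^T d_\Omega(\gamma(t))\,dt\leq K\epsilon$. The first yields $\tfrac{1}{2}$-H\"older continuity of $\gamma$ with constant independent of $\epsilon$ and $\delta$; the second, combined with H\"older continuity, will force the maximum of $d_\Omega\circ\gamma$ to be small as soon as $\epsilon$ is small. Hence I expect the dependence to scale like $\epsilon(\rho)\sim\rho^3$.

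First, I would apply the Cauchy-Schwarz inequality to the kinetic energy bound to obtain $|\gamma(t)-\gamma(s)|\leq C_0\,|t-s|^{1/2}$ for all $s,t\in[0,T]$, where $C_0:=\sqrt{4\mu K}$ is independent of $\epsilon,\delta,x$. Since $d_\Omega$ is $1$-Lipschitz, the composition $t\mapsto d_\Omega(\gamma(t))$ inherits the same $\tfrac12$-H\"older modulus.

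Next, set $M_\gamma:=\max_{t\in[0,T]}d_\Omega(\gamma(t))$, attained at some $t_0\in[0,T]$, and observe that $d_\Omega(\gamma(0))=0$ because $\gamma(0)=x\in\overline\Omega$. H\"older continuity then gives $M_\gamma=d_\Omega(\gamma(t_0))-d_\Omega(\gamma(0))\leq C_0\sqrt{t_0}$, so $t_0\geq M_\gamma^2/C_0^2$. Setting $r:=M_\gamma/(2C_0)$, this ensures $t_0\geq 4r^2>r^2$, so the interval $I:=[t_0-r^2,t_0]$ lies entirely inside $[0,T]$. For $t\in I$,
\[
d_\Omega(\gamma(t))\geq d_\Omega(\gamma(t_0))-C_0\sqrt{|t-t_0|}\geq M_\gamma-C_0\,r=\tfrac{M_\gamma}{2}.
\]
Integrating over $I$ and using the penalty bound,
\[
\frac{M_\gamma^3}{8C_0^2}=\frac{M_\gamma}{2}\cdot r^2\leq\int_I d_\Omega(\gamma(t))\,dt\leq\int_0^T d_\Omega(\gamma(t))\,dt\leq K\epsilon.
\]
Therefore $M_\gamma\leq(8C_0^2 K\epsilon)^{1/3}$, and choosing $\epsilon(\rho):=\rho^3/(8C_0^2 K)$ yields $M_\gamma\leq\rho$ whenever $\epsilon\in(0,\epsilon(\rho)]$, as claimed.

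No step looks genuinely hard; the main thing to be careful about is that the interval $I$ on which $d_\Omega(\gamma)\geq M_\gamma/2$ fits inside $[0,T]$. The boundary condition $\gamma(0)=x\in\overline\Omega$ handles this on the left (the inequality $t_0\geq 4r^2$ comes for free), while on the right nothing has to be checked because $I$ is taken to the left of $t_0\leq T$. I would also note that the argument uses only properties of minimizers of the penalized problem valid uniformly in $\delta\in(0,1]$, so the same $\epsilon(\rho)$ works for every such $\delta$, matching the statement.
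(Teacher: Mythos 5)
Your proof is correct and follows essentially the same route as the paper: the $\tfrac12$-H\"older bound from the kinetic energy estimate plus the penalty integral bound $\int_0^T d_\Omega(\gamma)\,dt\le K\epsilon$ force $\sup_t d_\Omega(\gamma(t))=O(\epsilon^{1/3})$. The only differences are that you argue directly with an explicit $\epsilon(\rho)\sim\rho^3$ instead of by contradiction, and you use $d_\Omega(\gamma(0))=0$ to guarantee the interval where $d_\Omega(\gamma)\ge M_\gamma/2$ fits inside $[0,T]$ --- a point the paper handles by simply intersecting with $[0,T]$.
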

 \begin{proof} 
We argue by contradiction. Assume that, for some $\rho>0$, there exist sequences $\{\epsilon_k\}$, $\{\delta_k\}$, $\{t_k\}$, $\{x_k\}$ and $\{\gamma_k\}$ such that
 	\begin{equation*}
\epsilon_k \downarrow 0,\ \delta_k>0,\ t_k\in[0,T],\ x_k\in \overline{\Omega},\ \gamma_k\in \mathcal{X}_{\epsilon_k,\delta_k}[x_k]\  \mbox{and}\ d_\Omega(\gamma_k(t_k))> \rho,\  \ \ \mbox{for all} \ k\geq 1.
 	\end{equation*}	
By Remark \ref{r4}, one has that for all $k\geq1$
 	\begin{eqnarray*}
 	\int_{0}^T \Big[\frac{1}{4\mu} |\dot{\gamma}_k(t)|^2 +\frac{1}{\epsilon_k}\ d_{\Omega}(\gamma_k(t))\Big]\,dt\leq K, 
 	\end{eqnarray*}
 	where $K$ is the constant given in \eqref{k}. The above inequality  implies that $\gamma_k$ is $1/2-$H\"{o}lder continuous with H\"{o}lder constant $(4\mu K)^{1/2}$. Then, by the Lipschitz continuity of $d_{\Omega}$ and the regularity of $\gamma_k$, we have that 
 	\begin{equation*}
 	d_\Omega(\gamma_k(t_k))-d_\Omega(\gamma_k(s))\leq (4\mu K)^{1/2}|t_k-s|^{1/2}, \ \ s\in[0,T].
 	\end{equation*}
 	Since $d_{ \Omega}(\gamma_k(t_k))> \rho$, one has that
 	\begin{equation*}
 	d_\Omega(\gamma_k(s))>\rho-(4\mu K)^{1/2}|t_k-s|^{1/2}.
 	\end{equation*}
 	Hence, $d_{\Omega}(\gamma_k(s))\geq \rho/2$ for all $s\in J:=[t_k-\frac{\rho^2}{16\mu K}, t_k+ \frac{\rho^2}{16\mu K}]\cap [0,T]$ and all $k\geq 1$. So, 
 	\begin{align*}
 	K\geq \frac{1}{\epsilon_k}\int_0^T d_\Omega(\gamma_k(t))\,dt\geq \frac{1}{\epsilon_k}\int_{J} d_\Omega(\gamma_k(t))\,dt \geq \frac{1}{\epsilon_k} \frac{\rho^3}{32\mu K}.
 	\end{align*}
 	But the above inequality contradicts the fact that $\epsilon_k\downarrow 0$. So, \eqref{dist} holds true.
 \end{proof}
 \noindent
 In the next lemma, we show the necessary conditions for the minimizers of the penalized problem.
 \begin{lemma}\label{lemma.1.2}
 	 Let $\rho\in(0,\rho_0]$ and let $\epsilon\in(0,\epsilon(\rho)]$, where $\epsilon(\rho)$ is given by Lemma \ref{lemma3.1}. Fix $\delta\in(0,1]$, let $ x_0\in\overline{\Omega}$, and let $\gamma\in\mathcal{X}_{\epsilon,\delta}[ x_0]$. Then, 
 	 \begin{enumerate}
 	 \item[(i)] $\gamma$ is of class $C^{1,1}([0,T];\mathbb{R}^n)$;
 	 \item[(ii)] there exists an arc $p\in \L(0,T;\mathbb{R}^n)$, a measurable map $\lambda:[0,T]\rightarrow [0,1]$, and a constant $\beta\in [0,1]$ such that
 	\begin{equation}\label{s3}
 	\begin{cases}				
 	\dot{\gamma}(t)=-D_pH(t,\gamma(t),p(t)), \ \ \ &\mbox{for all}\ t\in[0,T],\\
 	\dot{p}(t) =  D_x H(t,\gamma(t),p(t))-\frac{\lambda(t)}{\epsilon}\  D\d(\gamma(t)), &\mbox{for a.e.}\ t\in[0,T],\\
 	p(T)=Dg(\gamma(T))+\frac{\beta}{\delta}\ D\d(\gamma(T)),
 	\end{cases}
 	\end{equation}
 	where
 	\begin{equation}\label{lambda}
 	\lambda(t) \in 
 	\begin{cases}
 	\{0\}  & {\rm if }\; \gamma(t)\in \Omega,\\ 
 	\{1\} & {\rm if }\; 0<d_\Omega(\gamma(t))< \rho, \\
 	[0,1]  & {\rm if }\; \gamma(t) \in \partial \Omega,
 	\end{cases}
 	\end{equation}
 	and
 	\begin{equation}\label{beta}
 	\beta \in 
 	\begin{cases}
 	\{0\}  & {\rm if }\; \gamma(T)\in \Omega,\\ 
 	\{1\} & {\rm if }\; 0<d_\Omega(\gamma(T))< \rho, \\
 	[0,1]  & {\rm if }\; \gamma(T) \in \partial \Omega.
 	\end{cases}
 	\end{equation}
 	\end{enumerate}	
 	Moreover,
 	\begin{enumerate}
 	\item[(iii)] the function
 	\begin{equation*}
 	r(t):=H(t,\gamma(t),p(t))-\frac{1}{\epsilon} \ d_{\Omega}(\gamma(t)),  \ \ \ \forall t \in [0,T]
 	\end{equation*}
 	belongs to $AC(0,T;\mathbb{R})$ and satisfies 
 	$$
 	\int_0^T|\dot{r}(t)|\,dt\leq \kappa(T+4\mu K),
 	$$
 	where $K$ is the constant given in \eqref{k} and $\mu$, $\kappa$ are the constants in \eqref{lf1} and \eqref{l3}, respectively;
 	\item[(iv)] the following estimate holds
 	\begin{equation}\label{65}
 	|p(t)|^2\leq 4\mu\left[\frac{1}{\epsilon}d_\Omega(\gamma(t))+ \frac{C_1}{\delta^2}\right],\ \ \ \ \ \forall t \in[0,T],
 	\end{equation}
 	where $C_1=8\mu+8\mu||Dg||_\infty^2+2C(\mu,M')+ \kappa(T+4\mu K)$.
 	\end{enumerate}
 \end{lemma}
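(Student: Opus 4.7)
The plan is to apply a nonsmooth Maximum Principle to the penalized problem \eqref{v1}, which is a free-endpoint Bolza problem with no state constraint because $d_\Omega$ is Lipschitz. Standard results (e.g.\ Clarke's nonsmooth PMP) produce an absolutely continuous adjoint $p:[0,T]\to\mathbb{R}^n$ satisfying the maximum relation $\dot\gamma=-D_pH(t,\gamma,p)$ (using the strict convexity of $f$ in $v$ from (f1)), the adjoint inclusion $\dot p(t)\in D_xH(t,\gamma,p)-\epsilon^{-1}\partial d_\Omega(\gamma(t))$ a.e., and the transversality inclusion $p(T)\in Dg(\gamma(T))+\delta^{-1}\partial d_\Omega(\gamma(T))$. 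By the explicit description of $\partial d_\Omega$ in Lemma \ref{lemmaapp} combined with the confinement $d_\Omega(\gamma(t))\le\rho\le\rho_0$ from Lemma \ref{lemma3.1}, these inclusions reduce to \eqref{s3} with $\lambda(t)\in[0,1]$ and $\beta\in[0,1]$ obeying \eqref{lambda} and \eqref{beta}; measurability of $\lambda$ follows from a standard selection argument. This proves (ii).

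For (iii), set $r(t):=H(t,\gamma(t),p(t))-\epsilon^{-1}d_\Omega(\gamma(t))$, which is AC since $d_\Omega$ is Lipschitz, $\gamma$ is AC, and $H$ with its partial derivatives is continuous. Differentiating $r$ and substituting the Hamiltonian system, the antisymmetric cancellation $D_xH\cdot\dot\gamma+D_pH\cdot\dot p=-(\lambda/\epsilon)D_pH\cdot D\d(\gamma)=(\lambda/\epsilon)\dot\gamma\cdot D\d(\gamma)$ gives
\begin{equation*}
\dot r(t)=\partial_tH(t,\gamma,p)+\frac{\lambda(t)}{\epsilon}\dot\gamma(t)\cdot D\d(\gamma(t))-\frac{1}{\epsilon}\frac{d}{dt}d_\Omega(\gamma(t))\quad\text{a.e.}
\end{equation*}
A case analysis based on \eqref{lambda} shows the last two terms cancel a.e.: in $\{\gamma\in\Omega\}$ both vanish; in $\{0<d_\Omega(\gamma)<\rho\}$ we have $\lambda\equiv 1$ and $d_\Omega=\d$, so the chain rule gives the cancellation; in $\{\gamma\in\partial\Omega\}$ both $d_\Omega(\gamma)$ and $\d(\gamma)$ are constantly zero, hence their derivatives along $\gamma$ vanish a.e.\ on this set, so $\dot\gamma\cdot D\d(\gamma)=0$ and $(d/dt)d_\Omega(\gamma)=0$ there. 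Therefore $\dot r(t)=\partial_tH(t,\gamma,p)$ a.e., and the envelope identity $\partial_tH(t,x,p)=-\partial_tf(t,x,-D_pH(t,x,p))$ together with \eqref{lf1} yields $|\dot r(t)|\le\kappa(1+|\dot\gamma(t)|^2)$. Integrating and invoking Remark \ref{r4} gives the announced bound.

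For (iv), the lower bound in \eqref{c0} reads $\tfrac{1}{4\mu}|p(t)|^2\le C(\mu,M')+r(t)+\epsilon^{-1}d_\Omega(\gamma(t))$. By (iii) one has $r(t)\le r(T)+\kappa(T+4\mu K)$; moreover, $r(T)\le H(T,\gamma(T),p(T))$ because $d_\Omega\ge 0$, and the upper bound in \eqref{c0} together with the transversality estimate $|p(T)|\le\|Dg\|_\infty+\beta/\delta\le\|Dg\|_\infty+1/\delta$ gives $r(T)\le 8\mu\|Dg\|_\infty^2+8\mu/\delta^2+C(\mu,M')$. Combining these and using $\delta\le 1$ to absorb the additive constants into $1/\delta^2$ yields \eqref{65} with the stated $C_1$. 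Finally, (i) follows because (iv) bounds $|p|$ pointwise, so \eqref{39} and the boundedness of $D\d$ on $\Sigma_{\rho_0}$ make the right-hand side of the adjoint ODE essentially bounded; hence $p$ is Lipschitz, and $\dot\gamma=-D_pH(t,\gamma,p)$ is Lipschitz in $t$ via (H1)--(H2) applied along the Lipschitz pair $(\gamma,p)$, proving $\gamma\in C^{1,1}([0,T];\mathbb{R}^n)$.

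The most delicate ingredient is the boundary chain rule of step (iii) on the set $E:=\{t:\gamma(t)\in\partial\Omega\}$. A priori $\lambda(t)\in[0,1]$ there is nonunique, but the computation of $\dot r$ is insensitive to this choice because both $d_\Omega(\gamma(t))$ and $\d(\gamma(t))$ are constantly zero on $E$; since $\d\circ\gamma$ and $d_\Omega\circ\gamma$ are AC, their distributional derivatives vanish a.e.\ on $E$, so the two remaining terms in the expression of $\dot r$ each vanish a.e.\ on $E$ irrespective of the measurable selection of $\lambda$. This is the step that justifies the identity $\dot r=\partial_tH$ and therefore underlies all subsequent estimates.
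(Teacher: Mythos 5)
Your proposal is correct and follows the same overall strategy as the paper: apply a nonsmooth maximum principle to the penalized problem, use Lemma \ref{lemmaapp} together with the confinement $d_\Omega(\gamma(t))\leq\rho\leq\rho_0$ from Lemma \ref{lemma3.1} to turn the subdifferential inclusions into \eqref{s3}, \eqref{lambda}, \eqref{beta}, and then run the energy estimate to obtain (iv) and hence (i). Two genuine differences are worth noting. First, the paper recasts \eqref{v1} as a Mayer problem in $\mathbb{R}^{n+1}$ and invokes Vinter's state-constrained maximum principle, which obliges it to verify normality ($\lambda_0>0$) before rescaling the multipliers; your direct appeal to a Bolza-form PMP with free right endpoint legitimately sidesteps that step. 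Second, and more substantially, for (iii) the paper never differentiates $r$ by hand: the inclusion $(\dot r(t),\dot p(t))\in -b(t)\,co\,\partial_{t,x}\mathcal{L}_{\epsilon}(t,\gamma(t),\dot\gamma(t))$ delivered by the maximum principle yields $|\dot r(t)|\leq\kappa(1+|\dot\gamma(t)|^2)$ immediately, because the $t$-component of $\partial_{t,x}\mathcal{L}_{\epsilon}$ lies in $[-\kappa(1+|u|^2),\kappa(1+|u|^2)]$ by \eqref{lf1}. Your route instead computes $\dot r$ via the chain rule, cancels the symplectic terms, and disposes of the constraint terms by the case analysis on $\{\gamma\in\Omega\}$, $\{0<d_\Omega(\gamma)<\rho\}$, $\{\gamma\in\partial\Omega\}$ (using that an absolutely continuous function which is constant on a set has vanishing derivative a.e.\ on that set). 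That argument is sound and arguably more transparent, but it quietly relies on a chain rule for $t\longmapsto H(t,\gamma(t),p(t))$ when $H$ is merely Lipschitz in $t$, together with the envelope identity $\partial_tH(t,x,p)=-\partial_tf\big(t,x,-D_pH(t,x,p)\big)$ holding at a.e.\ $t$ along the extremal; this can be justified by a two-sided difference-quotient comparison using the maximizers at times $t$ and $t+h$, but it deserves a line, and it is precisely the technicality that the paper's inclusion-based formulation of (iii) avoids. Parts (iv) and (i) coincide with the paper's computations, including the constant $C_1$.
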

 \begin{proof}
 	In order to use the Maximum Principle in the version of \cite[ Theorem 8.7.1]{3v}, we rewrite (\ref{v1}) as a Mayer problem in a higher dimensional state space. Define $X(t)\in \mathbb{R}^n\times \mathbb{R}$ as
 	\begin{align*}
 	X(t)=
 	\begin{pmatrix}
 	\gamma(t) \\ z(t)
 	\end{pmatrix},
 	\end{align*}
 	where $z(t)=\int_0^t \big[f(s,\gamma(s),\dot \gamma(s)) +\frac{1}{\epsilon}\ d_{\Omega}(\gamma(s))\big]\, ds$. Then the state equation becomes 
 	\begin{align*}
 	\begin{cases}
 	\dot{X}(t)=
 	\begin{pmatrix}
 	\dot{\gamma}(t) \\ \dot{z}(t)
 	\end{pmatrix}=\mathcal{F}_\epsilon(t,X(t),u(t)),\\
 	\\
 	X(0)=
 	\begin{pmatrix}
 	 x_0 \\ 0
 	\end{pmatrix}.
 	\end{cases}
 	\end{align*}
 	where
 	\begin{align*}
 	\mathcal{F}_\epsilon(t,X,u)=\begin{pmatrix}
 	u \\ \mathcal{L}_{\epsilon}(t,x,u)
 	\end{pmatrix}
 	\end{align*}
 	and $\mathcal{L}_{\epsilon}(t,x,u)=f(t,x,u)+\frac{1}{\epsilon}\ d_{{\Omega}}(x)$ for $X=(x,z)$ and $(t,x,z,u)\in [0,T]\times \mathbb{R}^n\times \mathbb{R} \times \mathbb{R}^n$. Thus, (\ref{v1}) can be written as
 	\begin{equation}\label{b1}
 	\min \Big\{ \Phi(X^u(T)):u \in L^1\Big\},
 	\end{equation}
 	where $\Phi(X)=g(x) + \frac{1}{\delta}\ d_{{\Omega}}(x) + z$ for any $X=(x,z)\in \mathbb{R}^n\times \mathbb{R}$. The associated unmaximized Hamiltonian is given by
 	\begin{equation*}
 	\mathcal{H}_\epsilon (t,X,P,u)=- \langle P, \mathcal{F}_\epsilon(t,X,u)\rangle, \qquad \forall (t,X,P,u)\in [0,T]\times  \mathbb{R}^{n+1} \times  \mathbb{R}^{n+1}\times  \mathbb{R}^n. 
 	\end{equation*}
 	We observe that, as $\gamma(\cdot)$ is minimizer for (\ref{v1}), $X$ is minimizer for (\ref{b1}).
 	Hence, the hypotheses of \cite[Theorem 8.7.1]{3v} are satisfied. It follows that there exist $P(\cdot)=(p(\cdot), b(\cdot)) \in AC(0,T;\mathbb{R}^{n+1})$, $r(\cdot) \in AC(0,T;\mathbb{R})$, and $\lambda_0\geq 0$ such that
 	\begin{enumerate}
 		\item[(i)] $\big(P,\lambda_0\big)\not \equiv \big(0,0\big)$,
 		\item[(ii)] $\big(\dot r(t), \dot{P}(t)\big)\in co \ \partial_{t,X}\mathcal{H}_\epsilon\big(t,X(t),P(t),\dot{\gamma}(t)\big)$, a.e $t\in[0,T]$,
 		\item[(iii)] $P(T)\in \lambda_0 \partial \Phi(X^u(T))$,
 		\item[(iv)] $\mathcal{H}_\epsilon \big(t,X(t),P(t),\dot{\gamma}(t)\big)=\max_{u\in  \mathbb{R}^n} \mathcal{H}_\epsilon \big(t,X(t),P(t),u\big)$, a.e. $t\in [0,T]$,
 		\item[(v)]$\mathcal{H}_\epsilon\big(t,X(t),P(t),\dot{\gamma}(t)\big)=r(t)$, a.e. $t\in [0,T]$,			
 	\end{enumerate}
 	where  $\partial_{t,X}\mathcal{H}_\epsilon$ and $\partial\Phi$ denote the limiting subdifferential of $\mathcal{H}_\epsilon$ and $\Phi$ with respect to $(t,X)$ and $X$ respectively, while $co$ stands for the closed convex hull. Using the definition of $\mathcal{H}_\epsilon$ we have that
 	\begin{align}
 	(p,b,\lambda_0)&\not \equiv (0,0,0), \label{514}\\
 	(\dot{r}(t), \dot{p}(t))& \in -b(t)\ co\ \partial_{t,x}\mathcal{L}_{\epsilon}(t,\gamma(t),\dot{\gamma}(t)), \label{510}\\
 	\dot{b}(t)&=0, \label{511}\\
 	p(T) &\in \lambda_0\ \partial(g+\frac{1}{\delta} \ d_{\Omega})(\gamma(T)), \label{512}\\
 	b(T)&=\lambda_0,\label{513} \\
 	r(t) & =H_\epsilon(t,\gamma(t),p(t)),
 	\end{align}
 	where $\partial_{t,x}\mathcal{L}_{\epsilon}$ and  $\partial(g+\frac{1}{\delta}\  d_{\Omega})$ stands for the limiting subdifferential of $\mathcal{L}_{\epsilon}(\cdot,\cdot,u)$ and $g(\cdot)+\frac{1}{\delta} d_{\Omega}(\cdot)$. 
 	We claim that $\lambda_0>0$. Indeed, suppose that $\lambda_0=0$. Then $b\equiv 0$ by \eqref{511} and \eqref{513}. Moreover, $p(T)=0$ by \eqref{512}. It follows from \eqref{510} that $p\equiv 0$, which is in contradiction with (\ref{514}).
 	So, $\lambda_0> 0$ and we may rescale $p$ and $b$ so that $b(t)=\lambda_0=1$ for any $t\in [0,T]$.\\
 	Note that the Weierstrass Condition (iv) becomes
 	\begin{equation}\label{wc}
 	-\langle p(t), \dot{\gamma}(t)\rangle-f(t,\gamma(t),\dot \gamma(t))= \sup_{u\in \mathbb{R}^n} \Big\{-\langle p(t),u\rangle -f(t,\gamma(t),u)\Big\}.
 	\end{equation}
 	Therefore 
 	\begin{equation}\label{gh}
 	\dot{\gamma}(t)=-D_pH(t,\gamma(t),p(t)),\qquad {\rm a.e.}\; t\in [0,T]. 
 	\end{equation}
 By Lemma \ref{lemmaapp}, by the definition of $\rho$, and by \eqref{lf1} we have that 
 	\begin{align*}
 	\partial_{t,x} {\mathcal L}_{\epsilon} (t,x,u)\subset
 	\begin{cases}
 	[-\kappa(1+|u|^2), \kappa(1+|u|^2)]\times D_x f(t,x,u) & {\rm if }\; x\in \Omega,\\ 
 	[-\kappa(1+|u|^2), \kappa(1+|u|^2)] \times \big(D_x f(t,x,u)+ \frac{1}{\epsilon}\ D\d(x)\big) & {\rm if }\; 0<\d(x)< \rho, \\
 	[-\kappa(1+|u|^2), \kappa(1+|u|^2)]\times \big(D_x f(t,x,u)+ \frac{1}{\epsilon}[0,1] \ D\d(x)\big) & {\rm if }\; x\in \partial \Omega.
 	\end{cases}
 \end{align*}
 	Thus \eqref{510} implies that there exists $\lambda(t)\in [0,1]$  as in \eqref{lambda} such that 
 	\begin{align}				
 	|\dot{r}(t)| & \leq \kappa(1+|\dot{\gamma}(t)|^2), \ \ \forall t \in [0,T], &\label{rt} \\ 
 	\dot{p}(t) & =  -D_x f(t,\gamma(t),\dot{\gamma}(t))-\frac{\lambda(t)}{\epsilon} \ D\d(\gamma(t)),\ \; \text{a.e.}\ t\in [0,T]. & \label{398bis}
 	\end{align}
 	Hence, by \eqref{rt}, and by Remark \ref{r4} we conclude that
 	\begin{equation}\label{drt}
 	\int_0^T|\dot{r}(t)|\,dt\leq \kappa\int_0^T (1+|\dot{\gamma}(t)|^2)\,dt\leq \kappa(T+4\mu K).
 	\end{equation}
 	Moreover, by Lemma \ref{lemmaapp}, and by assumption on $g$, one has that
 	$$
 	\partial\Big(g+\frac{1}{\delta}\ d_{\Omega}\Big)(x) \subset 
 	\begin{cases}
 	Dg(x)  & {\rm if }\; x\in \Omega,\\ 
 	Dg(x)+\frac{1}{\delta}\ D\d(x) & {\rm if }\; 0<\d(x)< \rho, \\
 	Dg(x)+\frac{1}{\delta}[0,1]\ D\d(x)  & {\rm if }\; x \in \partial \Omega.
 	\end{cases}
 	$$
 	So, by \eqref{512}, there exists $\beta\in  [0,1]$ as in \eqref{beta} such that
 	\begin{equation}\label{fbg}
 	p(T)= Dg(x)+\frac{\beta}{\delta} \ D\d(x).
 	\end{equation}
 	Finally, by well-known properties of the Legendre transform one has that  
 	\begin{equation*}
 	D_xH(t,x,p)=  -D_xf\big(t,x, - D_pH(t,x,p)\big).
 	\end{equation*}
 	So, recalling \eqref{gh}, \eqref{398bis} can be rewritten as
 	$$
 	\dot{p}(t) =  D_x H(t,\gamma(t),p(t))-\frac{\lambda(t)}{\epsilon}\  D\d(\gamma(t)), \; \text{a.e.}\ t\in [0,T].
 	$$
 We have to prove estimate \eqref{65}. Recalling \eqref{perth} and \eqref{c0}, we have that
 \begin{align*}
 H_\epsilon(t,\gamma(t),p(t))=H(t,\gamma(t),p(t))-\frac{1}{\epsilon}\ d_{\Omega}(\gamma(t))\geq \frac{1}{4\mu}|p(t)|^2-C(\mu,M')-\frac{1}{\epsilon}\ d_{\Omega}(\gamma(t)).
 \end{align*}
So, using \eqref{drt} one has that 
 \begin{equation*}
 |H_\epsilon(T,\gamma(T),p(T))-H_\epsilon(t,\gamma(t),p(t))|=|r(T)-r(t)|\leq \int_t^T|\dot{r}(s)|\,ds\leq \kappa(T+4\mu K).
 \end{equation*}
 Moreover, \eqref{fbg} implies that $|p(T)|\leq \frac{1}{\delta}+||Dg||_\infty$. Therefore, using again \eqref{c0}, we obtain 
 \begin{align*}
 &\frac{1}{4\mu}|p(t)|^2-C(\mu,M')-\frac{1}{\epsilon}\ d_{\Omega}(\gamma(t))\leq  H_\epsilon(t,\gamma(t),p(t))\leq H_\epsilon(T,\gamma(T),p(T)) +\kappa(T+4\mu K)\\
 &\leq  4\mu|p(T)|^2+C(\mu,M') +\kappa(T+4\mu K)\leq 8\mu\left[ \frac{1}{\delta^2}+||Dg||_\infty^2\right]+C(\mu,M') +\kappa(T+4\mu K).
 \end{align*}
 Hence,
 \begin{equation*}
 |p(t)|^2\leq 4\mu\left[\frac{1}{\epsilon} d_{\Omega}(\gamma(t))+\frac{C_1}{\delta^2}\right],
 \end{equation*}
 where $C_1=8\mu+8\mu||Dg||_\infty^2+2C(\mu,M')+\kappa(T+4\mu K)$. This completes the proof of \eqref{65}.\\
 Finally, by the regularity of $H$, we have that $p\in \L(0,T;\mathbb{R}^n)$. So, $\gamma\in C^{1,1}([0,T];\mathbb{R}^n)$. Observing that the right-hand side of the equality $\dot{\gamma}(t)=-D_pH(t,\gamma(t),p(t))$ is continuous we conclude that this equality holds for all $t$ in $[0,T]$.
 \end{proof}
 \noindent
 \begin{lemma}\label{gc2}
		Let $\rho\in(0,\rho_0]$ and let $\epsilon\in(0, \epsilon(\rho)]$, where $ \epsilon(\rho)$ is given by Lemma \ref{lemma3.1}. Fix $\delta\in(0,1]$, let $ x\in\overline{\Omega}$, and let $\gamma\in\mathcal{X}_{\epsilon,\delta}[ x]$. If $\gamma(\overline{t})\notin \partial\Omega$ for some $\overline{t}\in [0,T]$, then there exists $\tau>0$ such that $\gamma\in C^2\left(\left(\overline{t}-\tau,\overline{t}+\tau\right)\cap [0,T];\mathbb{R}^n\right)$. 
				\end{lemma}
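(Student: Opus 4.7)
The plan is to combine the $C^{1,1}$ regularity of $\gamma$ (Lemma \ref{lemma.1.2}(i)) with the distance bound $d_\Omega(\gamma(\cdot))\leq \rho$ (Lemma \ref{lemma3.1}) in order to localize the problem to an interval on which the multiplier $\lambda$ is constant. Continuity of $\gamma$ together with the hypothesis $\gamma(\overline{t})\notin\partial\Omega$ yield a radius $\tau>0$ such that, on $I:=(\overline{t}-\tau,\overline{t}+\tau)\cap[0,T]$, the trajectory stays either entirely inside $\Omega$ or entirely inside the shell $\{0<d_\Omega<\rho\}\subset\Sigma_{\rho_0}$ (the second alternative uses Lemma \ref{lemma3.1} to rule out $d_\Omega\geq \rho_0$). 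By \eqref{lambda}, this forces $\lambda(t)=c$ for a.e.\ $t\in I$, with $c=0$ in the first case and $c=1$ in the second.

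On $I$ the system \eqref{s3} then collapses to the Hamiltonian ODE
\begin{equation*}
\dot\gamma=-D_pH(t,\gamma,p),\qquad \dot p = D_xH(t,\gamma,p)-\frac{c}{\epsilon}\,D\d(\gamma).
\end{equation*}
By assumption (H1), $D_pH$ and $D_xH$ are $C^1$ in $(x,p)$; in the boundary-shell case, \eqref{dn} ensures $D\d\in C^1(\Sigma_{\rho_0};\mathbb{R}^n)$. Hence the right-hand side is $C^1$ in $(\gamma,p)$ and, by (H2), Lipschitz in $t$. Standard ODE regularity therefore upgrades $(\gamma,p)$ from Lipschitz to $C^1$ on $I$, since a vector field that is continuous in $t$ yields solutions with continuous derivative.

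To pass from $C^1$ to $C^2$, the plan is to read off $\ddot\gamma$ from the Euler--Lagrange equation rather than from direct differentiation of $D_pH(t,\gamma,p)$ in $t$. Indeed, in the interior case the minimizer satisfies $\tfrac{d}{dt}D_vf(t,\gamma,\dot\gamma)=D_xf(t,\gamma,\dot\gamma)$ on $I$, and the analogous identity with the penalty term $\tfrac{c}{\epsilon}D\d(\gamma)$ added holds in the shell case; using the uniform invertibility $D^2_{vv}f\geq I/\mu$ from \eqref{f2}, one solves for $\ddot\gamma$ as a continuous function of $(t,\gamma,\dot\gamma)$, giving $\gamma\in C^2(I;\mathbb{R}^n)$. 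The principal obstacle is exactly this last step, since $H$ is only Lipschitz in $t$ and a naive differentiation of $\dot\gamma=-D_pH(t,\gamma,p)$ would lose continuity of the $t$-derivative; the Euler--Lagrange route bypasses this by expressing $\ddot\gamma$ through spatially $C^1$ data alone.
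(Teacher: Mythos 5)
Your localization and first bootstrap coincide with the paper's argument: continuity of $\gamma$ gives an interval $I$ around $\overline t$ on which $\gamma$ stays in $\Omega$ or in $\mathbb{R}^n\setminus\overline\Omega$ (with Lemma \ref{lemma3.1} keeping the second case inside $\Sigma_{\rho_0}$), so $\lambda\equiv c\in\{0,1\}$ by \eqref{lambda}, and the system \eqref{s3}, having continuous right-hand side on $I$, upgrades $\gamma$ and $p$ from Lipschitz to $C^1(I)$. Up to this point the proposal is correct and is essentially the paper's proof.

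The gap is in your final step. You rightly observe that $H$ is only Lipschitz in $t$, so one cannot simply differentiate $\dot\gamma=-D_pH(t,\gamma,p)$ in $t$; but the Euler--Lagrange detour does not bypass this. Setting $\Phi(t):=D_vf(t,\gamma(t),\dot\gamma(t))$, the a.e.\ identity $\dot\Phi=D_xf(t,\gamma,\dot\gamma)+\frac{c}{\epsilon}D\d(\gamma)$ does show $\Phi\in C^1(I)$, since its a.e.\ derivative is continuous. However, to recover $\ddot\gamma$ from $\Phi$ you must either (a) invert the map $v\mapsto D_vf(t,x,v)$, whose inverse is exactly $w\mapsto -D_pH(t,x,-w)$, so the merely Lipschitz $t$-dependence of \eqref{fvt}/\eqref{41} reappears and yields only $\dot\gamma$ Lipschitz on $I$; or (b) expand $\dot\Phi$ by the chain rule as $D^2_{vt}f+D^2_{vx}f\,\dot\gamma+D^2_{vv}f\,\ddot\gamma$ and invert $D^2_{vv}f$ via \eqref{f2} --- but the term $D^2_{vt}f(t,\gamma,\dot\gamma)$ exists only for a.e.\ $t$ and is merely bounded under (f2), so this gives $\ddot\gamma\in L^\infty(I)$, i.e.\ $\gamma\in C^{1,1}$ near $\overline t$, not $C^2$. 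The assertion that $\ddot\gamma$ is expressed ``through spatially $C^1$ data alone'' is therefore false: the partial $t$-derivative of $D_vf$ cannot be eliminated. For comparison, the paper's own proof passes from $p\in C^1(I)$ to $\dot\gamma\in C^1(I)$ by invoking ``the regularity of $H$'', i.e.\ it implicitly uses that $t\mapsto D_pH(t,x,p)$ is $C^1$ rather than merely Lipschitz; this is automatic in the MFG application, where $D_vf=D_vL$ does not depend on $t$, but it is not supplied by (f2)/(H2) alone, and your proposal neither reproduces that step nor genuinely repairs it.
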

		\begin{proof}
			Let $\gamma\in\mathcal{X}_{\epsilon,\delta}[x]$ and let $\overline{t}\in [0,T]$ be such that $\gamma(\overline{t})\in \Omega \cup(\mathbb{R}^n\setminus \overline{\Omega})$. If $\gamma(\overline{t})\in\mathbb{R}^n\setminus\overline{\Omega}$, then there exists $\tau>0$ such that $\gamma(t)\in \mathbb{R}^n\setminus\overline{\Omega}$ for all $t\in I:=(\overline{t}-\tau,\overline{t}+\tau)\cap[0,T]$. By Lemma \ref{lemma.1.2}, we have that there exists $p\in \L(0,T;\mathbb{R}^n)$ such that
			\begin{align*}
			&\dot{\gamma}(t)=-D_pH(t,\gamma(t),p(t)),\\
			&\dot{p}(t)= D_xH(t,\gamma(t),p(t))-\frac{1}{\epsilon}D\d(\gamma(t)),
			\end{align*}
			for $t\in I$. Since $p(t)$ is Lipschitz continuous for $t\in I$, and $\dot{\gamma}(t)=-D_pH(t,\gamma(t),p(t))$, then $\gamma$ belongs to $C^1\left(I;\mathbb{R}^n\right)$. Moreover, by the regularity of $H$, $\d$, $p$, and $\gamma$ one has that $\dot{p}(t)$ is continuous for $t\in I$. Then $p\in C^1\left(I;\mathbb{R}^n\right)$. Hence, $\dot{\gamma}\in C^1\left(I;\mathbb{R}^n\right)$.
			So, $\gamma\in C^2\left(I;\mathbb{R}^n\right)$. Finally, if $\gamma(\overline{t})\in\Omega$, the conclusion follows by a similar argument. 
\end{proof}
 \noindent
 In the next two lemmas, we show that, for $\epsilon>0$ and $\delta\in(0,1]$ small enough, any solution $\gamma$ of problem \eqref{v1} belongs to $\overline{\Omega}$ for all $t\in[0,T]$. For this we first establish that, if $\delta\in(0,1]$ is small enough and $\gamma(T)\notin \overline{\Omega}$, then the function $t\longmapsto \d(\gamma(t))$ has nonpositive slope at $t=T$. Then we prove that the entire trajectory $\gamma$ remains in $\overline{\Omega}$ provided $\epsilon$ is small enough. Hereafter, we set
 \begin{equation*}
 \epsilon_0=\epsilon(\rho_0),\ \ \ \mbox{where} \ \rho_0 \ \mbox{is such that \eqref{dn} holds and} \ \epsilon(\cdot) \ \mbox{is given by Lemma \ref{lemma3.1}}.
 \end{equation*}
 \begin{lemma}\label{lemma4}
 	Let 
 	\begin{equation}\label{deltao}
 	\delta= \frac{1}{2\mu N}\wedge 1,
 	\end{equation}
 	where 
 	\begin{equation*}
 	N=\sup_{x\in \mathbb{R}^n}|D_pH(T,x,Dg(x))|.
 	\end{equation*}
 	Fix any $\delta_1 \in (0,\delta]$ and let $ x\in \overline{\Omega}$. Let $\epsilon\in(0, \epsilon_0]$. If $\gamma\in\mathcal{X}_{\delta_1,\epsilon}[ x]$ is such that $\gamma(T)\notin\overline{\Omega}$, then
 \begin{equation*}
 	\langle \dot{\gamma}(T),D\d(\gamma(T))\rangle\leq 0.
 	\end{equation*}
 \end{lemma}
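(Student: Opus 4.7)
The plan is to exploit the transversality condition of Lemma \ref{lemma.1.2} together with the strong convexity of $H$ in $p$ (assumption (H1)) to force $\dot{\gamma}(T)$ to point inward. First I would use Lemma \ref{lemma3.1} (applied with $\rho$ slightly smaller than $\rho_0$ if necessary, so that $\epsilon(\rho)\le\epsilon_0$) to guarantee that, under the running hypothesis $\gamma(T)\notin\overline{\Omega}$, one has $0<d_\Omega(\gamma(T))<\rho_0$. Then by item (ii) of Lemma \ref{lemma.1.2}, formula \eqref{beta} forces $\beta=1$ in the transversality condition, which (since $d_\Omega=b_\Omega$ in a neighborhood of $\gamma(T)$) gives
\begin{equation*}
p(T)=Dg(\gamma(T))+\frac{1}{\delta_1}\,D b_\Omega(\gamma(T)).
\end{equation*}
Meanwhile the first line of \eqref{s3} yields $\dot\gamma(T)=-D_pH(T,\gamma(T),p(T))$, so that the desired inequality $\langle\dot\gamma(T),Db_\Omega(\gamma(T))\rangle\le 0$ is equivalent to
\begin{equation*}
\langle D_pH(T,\gamma(T),p(T)),\,Db_\Omega(\gamma(T))\rangle\ge 0.
\end{equation*}

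The key step is to compare $D_pH$ evaluated at $p(T)$ with its value at $Dg(\gamma(T))$, using strong convexity. Setting $\xi:=Db_\Omega(\gamma(T))$ (so that $|\xi|=1$) and writing the fundamental theorem of calculus along the segment $s\mapsto Dg(\gamma(T))+\frac{s}{\delta_1}\xi$, I would obtain
\begin{equation*}
D_pH(T,\gamma(T),p(T))=D_pH(T,\gamma(T),Dg(\gamma(T)))+\frac{1}{\delta_1}\int_0^1 D^2_{pp}H\Bigl(T,\gamma(T),Dg(\gamma(T))+\tfrac{s}{\delta_1}\xi\Bigr)\,\xi\, ds.
\end{equation*}
Taking the inner product with $\xi$, the lower bound $D^2_{pp}H\ge I/\mu$ from \eqref{h1b} gives $\langle D^2_{pp}H\,\xi,\xi\rangle\ge 1/\mu$, while the definition of $N$ combined with Cauchy--Schwarz yields $\langle D_pH(T,\gamma(T),Dg(\gamma(T))),\xi\rangle\ge -N$. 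Hence
\begin{equation*}
\langle D_pH(T,\gamma(T),p(T)),\xi\rangle\ \ge\ -N+\frac{1}{\mu\,\delta_1}.
\end{equation*}

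Finally, since $\delta_1\le\delta\le 1/(2\mu N)$, the right--hand side is bounded below by $-N+2N=N\ge 0$, which is exactly what I need. The only subtle point in the whole argument is the verification that Lemma \ref{lemma.1.2}(ii) can indeed be invoked with $\beta=1$: this amounts to guaranteeing $d_\Omega(\gamma(T))<\rho_0$ strictly, which either requires shrinking the threshold $\epsilon_0$ slightly, or rephrasing Lemma \ref{lemma3.1} with a strict bound; the rest of the reasoning consists of the one-line strong-convexity inequality above.
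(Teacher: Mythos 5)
Your proposal is correct and follows essentially the same route as the paper: both arguments plug the transversality condition $p(T)=Dg(\gamma(T))+\tfrac{1}{\delta_1}D\d(\gamma(T))$ into $\dot\gamma(T)=-D_pH(T,\gamma(T),p(T))$ and use the lower bound $D^2_{pp}H\ge I/\mu$ to show the penalization term dominates $N$, the only cosmetic difference being that you express the strong convexity via the fundamental theorem of calculus while the paper uses the equivalent strong monotonicity inequality for $D_pH$. Your side remark about ensuring $d_\Omega(\gamma(T))<\rho_0$ strictly is a fair point of care that the paper itself glosses over, and is easily resolved exactly as you suggest.
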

  \begin{proof}
 	As $\gamma(T) \notin \overline{\Omega}$, by Lemma \ref{lemma.1.2} we have that $p (T)=Dg(\gamma(T))+\frac{1}{\delta}\ D\d(\gamma(T))$. Hence,
 	\begin{align*}
 	\Big\langle &D_pH\big(T,\gamma(T),p(T)\big),D\d(\gamma(T))\Big\rangle=\Big\langle D_pH\big(T,\gamma(T),Dg(\gamma(T))\big),D\d(\gamma(T))\Big\rangle\\
 	&+\Big\langle D_pH\big(T,\gamma(T),Dg(\gamma(T))+\frac{1}{\delta}\ D\d(\gamma(T))\big)-D_pH\big(T,\gamma(T),Dg(\gamma(T))\big),D\d(\gamma(T))\Big\rangle.
 	\end{align*}
 	Recalling that $D^2_{pp}H(t,x,p)\geq \frac{I}{\mu}$, one has that
 	\begin{align*}
 	\Big\langle &D_pH\big(T,\gamma(T),Dg(\gamma(T))+\frac{1}{\delta}\ D\d(\gamma(T))\big)-D_pH\big(T,\gamma(T),Dg(\gamma(T))\big),\frac{1}{\delta}\ D\d(\gamma(T))\Big\rangle \\
 	&\geq \frac{1}{2\mu} \frac{1}{\delta^2}\ |D\d(\gamma(T))|^2= \frac{1}{2\delta^{2}\mu}.
 	\end{align*}
 	So,
 	\begin{equation*}
 	\Big\langle D_pH\big(T,\gamma(T),p(T)\big),D\d(\gamma(T))\Big\rangle\geq \frac{1}{2\delta\mu} -|D_pH\big(T,\gamma(T),Dg(\gamma(T))\big)|.
 	\end{equation*}
 	Therefore, we obtain
 	\begin{align*}
 	\big\langle \dot{\gamma}(T),D\d(\gamma(T))\big\rangle & =-\Big\langle D_pH\big(T,\gamma(T),p(T)),D\d(\gamma(T)\big)\Big\rangle \\ & \leq -\frac{1}{2\delta\mu} +|D_pH(T,\gamma(T),Dg(\gamma(T)))|.
 	\end{align*}
 	Thus, choosing $\delta$ as in \eqref{deltao} gives the result. 
 \end{proof}
 \begin{lemma}\label{lemma5}
 	Fix $\delta$ as in \eqref{deltao}. Then there exists $\epsilon_1\in(0,\epsilon_0]$, such that for any $\epsilon\in(0,\epsilon_1]$
 	\begin{equation*}
 	\forall x\in \overline{\Omega}, \ \gamma\in\mathcal{X}_{\epsilon,\delta}[x] \ \ \Longrightarrow \ \ \gamma(t)\in \overline{\Omega} \ \ \ \forall t\in[0,T].
 	\end{equation*}
 \end{lemma}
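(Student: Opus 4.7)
The plan is to argue by contradiction: suppose that for arbitrarily small $\epsilon>0$ there exist $x\in\overline\Omega$ and $\gamma\in\mathcal{X}_{\epsilon,\delta}[x]$ leaving $\overline\Omega$. Choose a maximal open interval $(t_1,t_2)\subset[0,T]$ on which $\gamma(t)\notin\overline\Omega$. Since $\gamma(0)=x\in\overline\Omega$, we have $\d(\gamma(t_1))=0$, and either $t_2=T$ (and $\d(\gamma(T))>0$) or $\d(\gamma(t_2))=0$. By Lemma \ref{lemma3.1}, applied with $\rho=\rho_0$, the whole trajectory lies in $\Sigma_{\rho_0}$, where $\d$ is $C^2$ with $|D\d|\equiv 1$; Lemma \ref{gc2} then gives $\gamma\in C^2((t_1,t_2);\mathbb{R}^n)$, hence $\phi(t):=\d(\gamma(t))$ is $C^2$ on $(t_1,t_2)$.

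The central computation is the sign of $\phi''$ on $(t_1,t_2)$. Since $\gamma(t)\notin\overline\Omega$ there, Lemma \ref{lemma.1.2} forces $\lambda(t)=1$, so $\dot p(t)=D_xH-\tfrac{1}{\epsilon}D\d(\gamma)$. Differentiating $\dot\gamma=-D_pH(t,\gamma,p)$ once more and substituting $\dot p$ yields
\[
\phi''(t)\;=\;\frac{1}{\epsilon}\big\langle D\d,\,D^2_{pp}H\,D\d\big\rangle \;+\; R(t),
\]
where $R(t)$ collects $\langle D^2\d\,\dot\gamma,\dot\gamma\rangle$, $-\langle D\d,\partial_tD_pH\rangle$, $-\langle D\d,D^2_{xp}H\,\dot\gamma\rangle$ and $-\langle D\d,D^2_{pp}H\,D_xH\rangle$. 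The main term is $\ge 1/(\epsilon\mu)$ thanks to (H1) and $|D\d|=1$. Using the growth bounds \eqref{312}, \eqref{39}, \eqref{40}, \eqref{41} together with $|\dot\gamma|\le C(\mu,M')(1+|p|)$ one gets $|R(t)|\le C_2(1+|p(t)|^2)$; inserting estimate \eqref{65} of Lemma \ref{lemma.1.2}(iv) transforms this into $|R(t)|\le \tfrac{C_3}{\epsilon}\phi(t)+C_4/\delta^2+C_5$ for constants independent of $\epsilon$.

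Combining gives $\phi''(t)\ge \tfrac{1}{\epsilon}\bigl(\tfrac{1}{\mu}-C_3\phi(t)\bigr)-C_4/\delta^2-C_5$. I would then make a two-step choice: first pick $\rho\in(0,\rho_0]$ with $C_3\rho\le 1/(2\mu)$, and invoke Lemma \ref{lemma3.1} with that $\rho$ so that $\phi\le\rho$ along the trajectory whenever $\epsilon\le\epsilon(\rho)$; with this bound the parenthesis is $\ge 1/(2\mu)$. Then, using that $\delta$ was \emph{fixed} in \eqref{deltao}, choose $\epsilon_1\in(0,\epsilon(\rho)]$ so small that $\tfrac{1}{2\epsilon_1\mu}>C_4/\delta^2+C_5$. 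For every $\epsilon\in(0,\epsilon_1]$ this produces $\phi''(t)>0$ strictly on $(t_1,t_2)$, hence $\phi$ is strictly convex on $[t_1,t_2]$.

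The contradiction now follows in two cases. If $t_2<T$, then $\phi(t_1)=\phi(t_2)=0$ while $\phi>0$ in between, which is impossible for a strictly convex function. If $t_2=T$, strict convexity combined with $\phi(t_1)=0<\phi(T)$ forces $\phi'(T)>0$, contradicting Lemma \ref{lemma4}, which gives $\phi'(T)=\langle\dot\gamma(T),D\d(\gamma(T))\rangle\le 0$. Either way, no exit interval can exist, proving the claim. The delicate point of the proof is precisely the interplay between the blow-up $|p|^2=O(1/\epsilon)$ on the exit region and the quadratic growth of $D_xH$: this is why $\rho$ must be shrunk \emph{before} $\epsilon$, and why the choice of $\epsilon_1$ is legitimately allowed to depend on the already-fixed $\delta$.
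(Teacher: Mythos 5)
Your proposal is correct and follows essentially the same route as the paper: the same second-derivative computation of $t\mapsto d_\Omega(\gamma(t))$ along the penalized flow, the same use of estimate \eqref{65} to control $|p|^2$ by $\tfrac{1}{\epsilon}d_\Omega(\gamma)+C/\delta^2$, the same two-step choice of $\rho$ before $\epsilon$, and the same appeal to Lemma \ref{lemma4} when the exit interval reaches $t=T$. The only (cosmetic) difference is that the paper evaluates the second derivative at a maximum point of $d_\Omega\circ\gamma_k$ and contradicts its nonpositivity there, whereas you establish strict convexity on the whole exit interval and contradict the boundary values; the two packagings rest on identical estimates.
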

 \begin{proof}
We argue by contradiction. Assume that there exist sequences $\{\epsilon_k\}$, $\{t_k\}$, $\{x_k\}$, $\{\gamma_k\}$ such that
\begin{equation}\label{contra}
\epsilon_k \downarrow 0, \ t_k \in [0,T], \ x_k \in \overline{\Omega}, \ \gamma_k\in\mathcal{X}_{\epsilon_k,\delta}[x_k] \ \mbox{and} \ \gamma_k(t_k) \notin \overline{\Omega}, \ \ \ \mbox{for all}\  k\geq 1.
\end{equation} 
Then, for each $k\geq 1$ one could find an interval with end-points $0\leq a_k <b_k\leq T$ such that
 \begin{equation*}
 \begin{cases}
 d_\Omega(\gamma_k(a_k))=0,\\
 d_\Omega(\gamma_k(t))>0 \ \ \ t\in(a_k,b_k),\\
 d_\Omega(\gamma_k(b_k))=0 \ \ \mbox{or else} \ \ b_k=T.
 \end{cases}
 \end{equation*}
Let $\overline{t}_k\in(a_k,b_k]$ be such that 
\begin{equation*}
d_\Omega(\gamma_k(\overline{t}_k))=\max_{t\in[a_k,b_k]} d_\Omega(\gamma_k(t)).
\end{equation*}
We note that, by Lemma \ref{gc2}, $\gamma_k$ is of class $C^2$ in a neighborhood of $\t$.\\
\underline{Step 1} \\
We claim that 
\begin{equation}\label{dss}
\frac{d^2}{dt^2}d_\Omega(\gamma_k(t))\Big|_{t=\overline{t}_k}\leq 0.
\end{equation}
Indeed, \eqref{dss} is trivial if $\overline{t}_k\in(a_k,b_k)$. Suppose $\overline{t}_k=b_k$. Since $\overline{t}_k$ is a maximum point of the map $t\longmapsto d_\Omega(\gamma_k(t))$ and $\gamma_k(\overline{t}_k)\notin\overline{\Omega}$, we have that $d_\Omega(\gamma_k(\overline{t}_k))\neq 0$. So, $b_k=T=\overline{t}_k$ and we get
\begin{equation*}
\frac{d}{dt} d_\Omega(\gamma_k(t))\Big|_{t=\overline{t}_k}\geq 0.
\end{equation*}
Moreover, Lemma \ref{lemma4} yields
\begin{equation*}
\frac{d}{dt} d_\Omega(\gamma_k(t))\Big|_{t=\overline{t}_k}\leq0.
\end{equation*}
So,
\begin{equation*}\label{dds1}
\frac{d}{dt} d_\Omega(\gamma_k(t))\Big|_{t=\overline{t}_k}=0,
\end{equation*}
and we have that \eqref{dss} holds true at $\overline{t}_k=T$.\\
\underline{Step 2}\\
 Now, we prove that
\begin{equation}\label{mu}
 \frac{1}{\mu\epsilon_k} \leq C(\mu,M',\kappa)\left[1+4\mu\frac{ C_1}{\delta^2}+\frac{4\mu}{\epsilon_k} \ d_{{\Omega}}(\gamma_k(\overline{t}_k))\right],  \  \ \ \ \forall k\geq 1,
\end{equation}
where $C_1=8\mu+8\mu||Dg||_\infty^2+2C(\mu,M')+\kappa(T+4\mu K)$ and the constant $C(\mu,M',\kappa)$ depends only on $\mu$, $M'$ and $\kappa$. Indeed, since $\gamma$ is of class $C^2$ in a neighborhood of $\overline{t}_k$ one has that
\begin{align}\label{dg2}
\ddot{\gamma}(\overline{t}_k)=&-D_{pt}^2H(\t,\gamma(\t),p(\t)) -\left\langle D_{px}^2H(\t,\gamma(\t),p(\t)),\dot{\gamma}(\t)\right\rangle \\
&-\left\langle D_{pp}^2H(\t,\gamma(\t),p(\t)),\dot{p}(\t)\right\rangle.\nonumber
\end{align}
Developing the second order derivative of $d_\Omega\circ\gamma$, by \eqref{dg2} and the expression of the derivatives of $\gamma$ and $p$ in Lemma \ref{lemma.1.2} one has that	
\begin{eqnarray*}
	0 &\geq &
	\left\langle D^2d_\Omega(\gamma (\t))\dot \gamma (\t), \dot \gamma (\t) \right\rangle  
	+
	\left\langle Dd_\Omega(\gamma (\t)),\ddot \gamma (\t)\right\rangle  
	\\
	&=& \left \langle D^2d_\Omega(\gamma(\t))D_{p}H (\t,\gamma(\t), p(\t)), D_{p}H(\t,\gamma(\t), p( \t))\right\rangle  
	\\
	& &   -\left\langle Dd_\Omega(\gamma (\t)), D_{pt}^2H (\t,\gamma (\t), p (\t))\right\rangle \\
	& & +
	\left\langle Dd_\Omega(\gamma (\t)), D_{px}^2H(\t,\gamma (\t), p(\t)) D_pH (\t,\gamma (\t), p (\t))\right\rangle \\ 
	& & -\left\langle Dd_\Omega(\gamma (\t)), D_{pp}^2H (\t,\gamma (\t), p(\t)) D_xH (\t,\gamma (\t), p (\t))\right\rangle
	\\
	& &   +\frac{1}{\epsilon}\left\langle Dd_\Omega(\gamma(\t)), D_{pp}^2H (\t,\gamma(\t), p(\t))Dd_\Omega(\gamma (\t))\right\rangle.
\end{eqnarray*}
We now use the growth properties of $H$  in \eqref{40}, and \eqref{41}-\eqref{39}, the lower bound for $D_{pp}^2H$ in \eqref{h1b}, and the regularity of the boundary of $\Omega$ to obtain:
\begin{align*}
\frac{1}{\mu\epsilon_k}\leq C(\mu,M')(1+|p(\t)|)^2+\kappa C(\mu,M')(1+|p(\t)|)\leq  C(\mu,M',\kappa)(1+|p(\t)|^2),
\end{align*}
where the constant $C(\mu,M',\kappa)$ depends only on $\mu$, $M'$ and $\kappa$. 
By our estimate for $p$ in \eqref{65} we get:
\begin{align*}
\frac{1}{\mu\epsilon_k} \leq C(\mu,M',\kappa)\left[1+4\mu\frac{C_1}{\delta^2}+\frac{4\mu}{\epsilon_k} d_{{\Omega}}(\gamma(\t))\right], \ \ \forall \ k\geq 1,
\end{align*}
where $C_1=8\mu+8\mu||Dg||_\infty^2+2C(\mu,M')+\kappa(T+4\mu K)$.\\
\underline{Conclusion}\\
Let $\rho=\min\left\{\rho_0,\frac{1}{32 C(\mu,M',\kappa) \mu^2}\right\}$. Owing to Lemma \ref{lemma3.1}, for all $\epsilon\in(0, \epsilon(\rho)]$ we have that 
\begin{equation*}
\sup_{t\in[0,T]} d_\Omega(\gamma(t))\leq \rho, \ \  \ \ \forall \gamma\in\mathcal{X}_{\epsilon,\delta}[x].
\end{equation*}
Hence, using \eqref{mu}, we deduce that
\begin{equation*}
\frac{1}{2\mu\epsilon_k}\leq 4 C(\mu,M',\kappa)\left[1+4\mu\frac{C_1}{\delta^2}\right].
\end{equation*}
Since the above inequality fails for $k$ large enough, we conclude that \eqref{contra} cannot hold true. So, $\gamma(t)$ belongs to $\overline{\Omega}$ for all $t\in[0,T]$.
 \end{proof}
 \noindent
 An obvious consequence of Lemma \ref{lemma5} is the following:
 \begin{corollario}\label{coro711} Fix $\delta$ as in \eqref{deltao} and take $\epsilon=\epsilon_1$, where $\epsilon_1$ is defined as in Lemma \ref{lemma5}. Then an arc $\gamma(\cdot)$ is a solution of problem (\ref{v1}) if and only if it is also a solution of (\ref{M}).
 \end{corollario}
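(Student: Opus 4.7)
The plan is to observe that Lemma \ref{lemma5} does almost all the work: once we know that every minimizer of the penalized problem \eqref{v1} stays in $\overline{\Omega}$, both penalty terms $\frac{1}{\epsilon}d_{\Omega}(\gamma(t))$ and $\frac{1}{\delta}d_{\Omega}(\gamma(T))$ vanish on optimal arcs, so the penalized cost coincides with $J[\gamma]$ on such arcs. Since these penalty terms also vanish on every competitor $\tilde\gamma\in\Gamma[x]$, a short double comparison yields the equivalence of minimizers. The only conceptual input is the trivial identity $d_{\Omega}\equiv 0$ on $\overline{\Omega}$; no new estimates are required.

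First I would handle the implication ``$\gamma$ solves \eqref{v1} $\Rightarrow$ $\gamma$ solves \eqref{M}''. Let $\gamma\in\mathcal{X}_{\epsilon_1,\delta}[x]$. By Lemma \ref{lemma5}, $\gamma(t)\in\overline{\Omega}$ for every $t\in[0,T]$, hence $d_{\Omega}(\gamma(t))\equiv 0$ and $d_{\Omega}(\gamma(T))=0$, so the cost in \eqref{v1} evaluated at $\gamma$ reduces to $J[\gamma]$. For any $\tilde\gamma\in\Gamma[x]\subset AC(0,T;\mathbb{R}^n)$ with $\tilde\gamma(0)=x$, the penalty terms vanish for the same reason, so the cost in \eqref{v1} evaluated at $\tilde\gamma$ equals $J[\tilde\gamma]$. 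Since $\gamma$ minimizes the penalized functional over all absolutely continuous arcs with initial value $x$, we obtain $J[\gamma]\leq J[\tilde\gamma]$ for every $\tilde\gamma\in\Gamma[x]$, i.e., $\gamma\in\mathcal{X}[x]$.

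For the converse, let $\gamma^\star\in\mathcal{X}[x]$. Using the existence result for \eqref{v1} recalled before Remark \ref{r4}, pick some $\gamma\in\mathcal{X}_{\epsilon_1,\delta}[x]$; by the previous step $\gamma\in\mathcal{X}[x]$ too, so $J[\gamma]=J[\gamma^\star]$. Since $\gamma^\star\in\Gamma[x]$, the penalty contributions in \eqref{v1} vanish at $\gamma^\star$, and the value of the penalized functional at $\gamma^\star$ equals $J[\gamma^\star]=J[\gamma]$, which by Lemma \ref{lemma5} is precisely the infimum in \eqref{v1}. Hence $\gamma^\star\in\mathcal{X}_{\epsilon_1,\delta}[x]$, completing the equivalence. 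The main obstacle was handled entirely inside Lemma \ref{lemma5}; this corollary is essentially a bookkeeping step.
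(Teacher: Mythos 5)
Your proof is correct and follows exactly the intended route: the paper states this corollary as an ``obvious consequence'' of Lemma \ref{lemma5} without writing out the details, and your double-comparison argument (penalties vanish on $\overline{\Omega}$, so the two functionals agree on $\Gamma[x]$ while the penalized infimum is taken over a larger class whose minimizers Lemma \ref{lemma5} forces back into $\Gamma[x]$) is precisely the bookkeeping that justifies it. Both directions are handled cleanly, including the existence of a penalized minimizer needed for the converse.
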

 \noindent
 We are now ready to complete the proof of Theorem \ref{51}.
 \begin{proof}[Proof of Theorem \ref{51}]
	Let $ x\in\overline{\Omega}$ and $\gamma^\star\in \mathcal{X}[ x]$. By Corollary \ref{coro711} we have that $\gamma^\star$ is a solution of problem \eqref{v1} with $\delta$ as in \eqref{deltao} and $\epsilon=\epsilon_1$ as in Lemma \ref{lemma5}. Let $p(\cdot)$ be the associated adjoint map such that $(\gamma^\star(\cdot),p(\cdot))$ satisfies \eqref{s3}. Moreover, let $\lambda(\cdot)$ and $\beta$ be defined as in Lemma \ref{lemma.1.2}. Define $\nu= \frac{\beta}{\delta}$. Then we have $0\leq \nu \leq \frac{1}{\delta}$ and, by \eqref{s3}, 
	\begin{equation}\label{pT}
	p(T)= Dg(\gamma^\star(T))+ \nu \ D\d(\gamma^\star(T)). 
	\end{equation}
	By Lemma \ref{lemma.1.2} $\gamma^\star\in C^{1,1}([0,T];\overline{\Omega})$ and 
	\begin{equation}\label{gs}
	\dot{\gamma}^\star(t)=-D_pH(t,\gamma^\star(t),p(t)), \ \ \ \forall\ t\in[0,T].
	\end{equation}
	Moreover, $p(\cdot)\in \L(0,T;\mathbb{R}^n)$ and by \eqref{65} one has that
	\begin{equation*}
		|p(t)|\leq 2\frac{\sqrt{\mu C_1}}{\delta}, \ \ \ \forall t\in[0,T],
	\end{equation*}
	where $C_1=8\mu+8\mu||Dg||_\infty^2+2C(\mu,M')+\kappa(T+4\mu K)$. Hence, $p$ is bounded. By \eqref{gs}, and by \eqref{312} one has that
	\begin{equation*}
		||\dot{\gamma}^\star||_\infty=\sup_{t\in [0,T]}|D_p H(t,\gamma^\star(t),p(t))|\leq C(\mu,M')\Big(\sup_{t\in [0,T]} |p(t)|+1\Big)\leq C(\mu,M')\Big(2\frac{\sqrt{\mu C_1}}{\delta}+1\Big)\Big) \hspace*{-1mm}=\hspace*{-1mm}L^\star,
	\end{equation*}
where $L^\star=L^\star(\mu,M',M,\kappa,T,||Dg||_\infty,||g||_\infty)$.
	Thus, \eqref{lstar} holds\\ 	
 Finally, we want to find an explicit expression for $\lambda(t)$. 
 	For this, we set 
 	\begin{equation*}
 	D=\Big\{t \in[0,T]: \gamma^\star(t)\in\partial\Omega\Big\}\; {\rm and}\; D_{\rho_0}=\Big\{t \in[0,T]: |\d(\gamma^\star(t))|<\rho_0\Big\},
 	\end{equation*}
 	where $\rho_0$ is as in assumption \eqref{dn}.
 	Note that $\psi(t):= \d\circ\gamma^\star$ is of class $C^{1,1}$ on the open set $D_{\rho_0}$, with
 	\begin{equation*}
 	\dot \psi(t)=\Big\langle D\d(\gamma^\star(t)),\dot \gamma^\star(t)\Big\rangle=\Big\langle D\d(\gamma^\star(t)),-D_pH(t,\gamma^\star(t),p(t)) \Big\rangle.
 	\end{equation*}
 	Since $p\in \L(0,T;\mathbb{R}^n)$, $\dot{\psi}$ is absolutely continuous on $D_{\rho_0}$ with 
 	\begin{eqnarray*}
 	\ddot{\psi}(t)&=&-\Big\langle D^2\d(\gamma^\star(t))\dot \gamma^\star(t),D_pH\big(t,\gamma^\star(t),p(t)\big)\Big\rangle - \Big\langle D\d(\gamma^\star(t)),D_{pt}^2 H\big(t,\gamma^\star(t),p(t)\big)\Big\rangle\\
 	&-& \Big\langle D\d(\gamma^\star(t)),D_{px}^2 H\big(t,\gamma^\star(t),p(t)\big)\dot \gamma^\star(t)\Big\rangle-\Big\langle D\d(\gamma^\star(t)),D_{pp}^2 H\big(t,\gamma^\star(t),p(t)\big)\dot{p}(t)\Big\rangle\\
 	&=&\Big\langle D^2\d(\gamma^\star(t))D_pH\big(t,\gamma^\star(t),p(t)\big),D_pH\big(t,\gamma^\star(t),p(t)\big)\Big\rangle \\
 	&-& \Big\langle D\d(\gamma^\star(t)),D_{pt}^2 H\big(t,\gamma^\star(t),p(t)\big)\Big\rangle\\
 	&+& \Big\langle D\d(\gamma^\star(t)),D_{px}^2 H\big(t,\gamma^\star(t),p(t)\big)D_pH\big(t,\gamma^\star(t),p(t)\big)\Big\rangle\\
 &-&\Big\langle D\d(\gamma^\star(t)),D_{pp}^2H\big(t,\gamma^\star(t),p(t)\big)D_xH\big(t,\gamma^\star(t),p(t)\big)\rangle\\
 	&+&\frac{\lambda(t)}{\epsilon}\ \Big\langle D\d(\gamma^\star(t)),D_{pp}^2H\big(t,\gamma^\star(t),p(t)\big)D\d(\gamma^\star(t))\Big\rangle.
 	\end{eqnarray*}
 	Let $N_{\gamma^\star}=\{t\in D\cap (0,T)|\ \dot{\psi}(t)\neq 0\}$. Let $t\in N_{\gamma^\star}$, then there exists $\sigma>0$ such that $\gamma^\star(s)\notin \partial\Omega$ for any $s\in ((t-\sigma,t+\sigma)\setminus\{t\})\cap (0,T)$. Therefore, $N_{\gamma^\star}$ is composed of isolated points and so it is a discrete set. Hence, $\dot{\psi}(t)=0$ a.e. $t\in D\cap (0,T)$. So, $\ddot{\psi}(t)=0$ a.e.  in $D$, because $\dot \psi$ is absolutely continuous. 
 	Moreover, since $D_{pp}^2 H(t,x,p)>0$ and $|D\d(\gamma^\star(t))|=1$, we have that 
 	$$
 	\Big\langle D\d(\gamma^\star(t)),D_{pp}^2 H\big(t,\gamma^\star(t),p(t)\big)D\d(\gamma^\star(t))\Big\rangle>0, \qquad {\rm a.e.} \;t\in D_{\rho_0}.
 	$$ 
 	So, for a.e. $t\in D$, $\lambda(t)$ is given by
 	\begin{align*}
 	\frac{\lambda(t)}{\epsilon} =&\frac{1}{\langle D\d(\gamma^\star(t)),D_{pp}^2 H(t,\gamma^\star(t),p(t))D\d(\gamma^\star(t))\rangle}\ \Big[ \Big\langle D\d(\gamma^\star(t)),D_{pt}^2 H\big(t,\gamma^\star(t),p(t)\big)\Big\rangle\\
 	&-\Big\langle D^2\d(\gamma^\star(t))D_pH\big(t,\gamma^\star(t),p(t)\big),D_pH\big(t,\gamma^\star(t),p(t)\big)\Big\rangle\\
 	&- \Big\langle D\d(\gamma^\star(t)),D_{px}^2 H\big(t,\gamma^\star(t),p(t)\big)D_pH\big(t,\gamma^\star(t),p(t)\big)\Big\rangle\\
 	&+\Big\langle D\d(\gamma^\star(t)),D_{pp}^2H\big(t,\gamma^\star(t),p(t)\big)D_xH\big(t,\gamma^\star(t),p(t)\big) \Big\rangle
 	\Big].
 	\end{align*}
 	Since $\lambda(t)=0$ for all $t\in[0,T]\setminus D$ by \eqref{lambda}, taking $\Lambda(t,\gamma^\star(t),p(t))=\frac{\lambda(t)}{\epsilon}$, we obtain the conclusion.
 	 \end{proof}
 	 \begin{remark}\label{lambda1}
 	The above proof gives a representation of $\Lambda$, i.e., for all $(t,x,p) \in[0,T]\times\Sigma_{\rho_0}\times \mathbb{R}^n$ one has that 
 	   	\begin{align*}
 	   	\Lambda(t,x,p)=
 	   	&\frac{1}{\theta(t,x,p)}\ \Big[
 	   	-\Big\langle D^2\d(x)D_pH\big(t,x,p\big),D_pH\big(t,x,p\big)\Big\rangle- \Big\langle D\d(x),D_{pt}^2 H\big(t,x,p\big)\Big\rangle- \\
 	   	&\Big\langle D\d(x),D_{px}^2 H\big(t,x,p\big)D_pH\big(t,x,p\big)\Big\rangle+\Big\langle D\d(x),D_{pp}^2H\big(t,x,p\big)D_xH\big(t,x,p\big) \Big\rangle
 	   	\Big],
 	   	\end{align*}
 	   	where $\theta(t,x,p):=\langle D\d(x),D_{pp}^2 H(t,x,p)D\d(x)\rangle$. Observe that \eqref{h1b} ensures that $\theta(t,x,p)>0$ for all $t\in [0,T]$, for all $x\in\Sigma_{\rho_0}$ and for all $p\in \mathbb{R}^n$.
 	 \end{remark}
 	\subsection{Proof of Theorem \ref{51} for general $U$}
 	We now want to remove the extra assumption $U=\mathbb{R}^n$. For this purpose, it suffices to show that the data $f$ and $g$---a priori defined just on $U$---can be extended to $\mathbb{R}^n$ preserving the conditions in (f0)-(f2) and (g1). So, we proceed to construct such an extension by taking a cut-off function $\xi\in C^\infty(\mathbb{R})$ such that
 	\begin{align}\label{xi}
 	\begin{cases}
 	\xi(x)=0 \ \ \ \ &\mbox{if} \ \ x\in (-\infty,\frac{1}{3}],\\
 	0<\xi(x)<1 &\mbox{if}\ \ x\in (\frac{1}{3},\frac{2}{3}),\\
 	\xi=1 &\mbox{if} \ \ x \in [\frac{2}{3},+\infty).
 	\end{cases}
 	\end{align}
 	\begin{lemma}
 		Let $\Omega\subset\mathbb{R}^n$ be a bounded open set with $C^2$ boundary. Let $U$ be a open subset of $\mathbb{R}^n$ such that $\overline{\Omega}\subset U$ and 
 		\begin{equation*}
 		0<dist(\overline{\Omega}, \mathbb{R}^n\setminus U)=:\sigma_0.
 		\end{equation*}
 		Suppose that $f:[0,T]\times U\times \mathbb{R}^n\rightarrow \mathbb{R}$ and $g:U\rightarrow \mathbb{R}$ satisfy (f0)-(f2) and (g1), respectively.
 		Set $\sigma=\sigma_0\wedge \rho_0$. Then, the function $f$ admits the extension
 		\begin{equation*}
 		\widetilde{f}(t,x,v)= \xi\left(\frac{\d(x)}{\sigma}\right)\frac{|v|^2}{2}+ \left (1-\xi\left(\frac{\d(x)}{\sigma}\right)\right)f(t,x,v), \ \ \ \forall \ (t,x,v)\in[0,T]\times\mathbb{R}^n\times\mathbb{R}^n,
 		\end{equation*}
 		that verifies the conditions (f0)-(f2) with $U=\mathbb{R}^n$. Moreover, the function $g$ admits the extension
 $$
 \widetilde{g}(x)= \left( 1-\xi\left(\frac{\d(x)}{\sigma}\right)\right)g(x),\ \  \ \ \forall x\in\mathbb{R}^n,
 $$
 that satisfies the condition (g1) with $U=\mathbb{R}^n$.		
 	\end{lemma}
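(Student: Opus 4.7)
\medskip

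\noindent\textbf{Proof proposal.} The plan is to verify each of the regularity and growth conditions for $\widetilde{f}$ and $\widetilde{g}$ directly from their definition, using the following three preliminary observations. First, the support of $1-\xi(b_\Omega(x)/\sigma)$ is contained in $\{x:b_\Omega(x)<2\sigma/3\}$, and on this set one has $d_\Omega(x)\le 2\sigma/3<\sigma_0$, so such points lie in $U$; hence all evaluations of $f$ and $g$ in the definitions of $\widetilde f$, $\widetilde g$ make sense. Second, on the transition zone $\{x:\sigma/3<b_\Omega(x)<2\sigma/3\}$ one has $|b_\Omega(x)|<\sigma\le \rho_0$, so $b_\Omega\in C^2$ there by \eqref{dn}; outside this zone the composition $\xi(b_\Omega(\cdot)/\sigma)$ is locally constant. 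Third, since $\mu\ge 1$, the identity matrix $I$ itself satisfies $I/\mu\le I\le I\mu$.

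Granted these, I would check (f0). By construction $\widetilde{f}\in C([0,T]\times\mathbb{R}^n\times\mathbb{R}^n)$ and the map $(x,v)\mapsto\widetilde{f}(t,x,v)$ is $C^1$ with gradients obtained by the product/chain rule; continuity of $D_x\widetilde{f}$, $D_v\widetilde{f}$ is inherited from that of $f$, $D_xf$, $D_vf$, $D\oh$ and $\xi'$ via the two zones above. Setting $v=0$, the term $\xi(b_\Omega(x)/\sigma)|v|^2/2$ vanishes, so $\widetilde{f}(t,x,0)=(1-\xi)f(t,x,0)$ and $D_v\widetilde{f}(t,x,0)=(1-\xi)D_vf(t,x,0)$, each bounded by $M$ thanks to (f0) for $f$ and the support location. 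For $D_x\widetilde{f}(t,x,0)$ one picks up an extra term $-\xi'(b_\Omega/\sigma)\sigma^{-1}Db_\Omega(x)\,f(t,x,0)$, which is bounded by $\sigma^{-1}\|\xi'\|_\infty M$ since $|Db_\Omega|=1$ on the transition zone. This gives a uniform bound $M'$ depending only on $M,\mu,\sigma,\|\xi'\|_\infty$.

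Next I would verify (f1). A direct computation gives
\begin{equation*}
D^2_{vv}\widetilde{f}(t,x,v)=\xi(b_\Omega(x)/\sigma)\,I+\bigl(1-\xi(b_\Omega(x)/\sigma)\bigr)D^2_{vv}f(t,x,v),
\end{equation*}
which is a convex combination of matrices lying between $I/\mu$ and $I\mu$ (using $\mu\ge 1$ for the identity); hence \eqref{f2} is preserved. For $D^2_{vx}\widetilde{f}$, differentiating $D_v\widetilde{f}(t,x,v)=\xi v+(1-\xi)D_vf$ in $x$ produces three contributions: a cutoff-derivative term controlled by $\sigma^{-1}\|\xi'\|_\infty|v|$, a term bounded by $\sigma^{-1}\|\xi'\|_\infty|D_vf(t,x,v)|\le C(1+|v|)$ by Lemma \ref{r3}, and a term $(1-\xi)D^2_{vx}f$ bounded by $\mu(1+|v|)$ via \eqref{fvx}. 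Summing yields an estimate of the form $C(1+|v|)$ as required. Condition (f2) is even simpler: the cutoff is $t$-independent, so
\begin{equation*}
|\widetilde{f}(t,x,v)-\widetilde{f}(s,x,v)|\le (1-\xi)|f(t,x,v)-f(s,x,v)|\le \kappa(1+|v|^2)|t-s|,
\end{equation*}
and likewise for $D_v\widetilde{f}$. The argument for (g1) follows the same pattern: $\widetilde{g}$ is $C^1$ on $\mathbb{R}^n$ by the two-zone observation, and $D\widetilde{g}(x)=-\xi'(b_\Omega/\sigma)\sigma^{-1}Db_\Omega(x)g(x)+(1-\xi)Dg(x)$ is globally bounded by $\sigma^{-1}\|\xi'\|_\infty\|g\|_\infty+\|Dg\|_\infty$.

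The only place requiring care is the $D^2_{vx}\widetilde{f}$ bound, where one must invoke Lemma \ref{r3} to control $|D_vf(t,x,v)|$ linearly in $|v|$; without this step the cutoff-derivative term coming from $D_xf$ would give an $a\,priori$ unwanted dependence. Everything else is a routine chain-rule computation once the two-zone decomposition and the support observation are in place.
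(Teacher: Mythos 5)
Your proof is correct and follows essentially the same route as the paper's: the convex-combination argument for $D^2_{vv}\widetilde f$ (using $\mu\ge 1$), the three-term product rule for $D^2_{vx}\widetilde f$ with Lemma \ref{r3} controlling the $D_vf\otimes D\d$ contribution, and the $t$-independence of the cutoff for (f2). Your preliminary observations on the support of $1-\xi(\d(\cdot)/\sigma)$ and on the zone where $\d$ is $C^2$ make explicit two well-definedness points that the paper leaves implicit, but they do not alter the argument.
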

 	\begin{proof}
 By construction we note that $\widetilde{f}\in C([0,T]\times \mathbb{R}^n\times\mathbb{R}^n)$. Moreover, for all $t\in[0,T]$ the function  $(x,v)\longmapsto \widetilde{f}(t,x,v)$ is differentiable and the map $(x,v)\longmapsto D_{v}\widetilde{f}(t,x,v)$ is continuously differentiable by construction. Furthermore, $D_x\widetilde{f}$, $D_v\widetilde{f}$ are continuous on $[0,T]\times \mathbb{R}^n\times \mathbb{R}^n$ and $\widetilde{f}$ satisfies \eqref{bm}.
In order to prove \eqref{f2} for $\widetilde{f}$, we observe that
 \begin{align*}
 D_v\widetilde{f}(t,x,v)= \xi\left(\frac{\d(x)}{\sigma}\right) v + \left (1-\xi\left(\frac{\d(x)}{\sigma}\right)\right)D_vf(t,x,v),
 \end{align*}
 and
\begin{align*}
D_{vv}^2\widetilde{f}(t,x,v)=\xi\left(\frac{\d(x)}{\sigma}\right)I+\left (1-\xi\left(\frac{\d(x)}{\sigma}\right)\right)D_{vv}^2f(t,x,v).
\end{align*}
Hence, by the definition of $\xi$ and \eqref{f2} we obtain that
\begin{equation*}
\Big(1\wedge\frac{1}{\mu}\Big)I\leq D_{vv}^2\widetilde{f}(t,x,v)\leq (1\vee \mu) I,\ \  \ \ \ \forall \ (t,x,v)\in[0,T]\times\mathbb{R}^n\times \mathbb{R}^n.
\end{equation*}
Since $\mu\geq 1$, we have that $\widetilde{f}$ verifies the estimate in \eqref{f2}.\\
Moreover, since 
\begin{eqnarray*}
D_x(D_v\widetilde{f}(t,x,v))&=&\dot{\xi}\left(\frac{\d(x)}{\sigma}\right)v\otimes\frac{D\d(x)}{\sigma}+\left(1-\xi\left(\frac{\d(x)}{\sigma}\right)\right)D_{vx}^2f(t,x,v)\\
&-&\dot{\xi}\left(\frac{\d(x)}{\sigma}\right)D_vf(t,x,v)\otimes\frac{D\d(x)}{\sigma},
\end{eqnarray*}
and by \eqref{fvx} we obtain that
\begin{equation*}
||D_{vx}^2\widetilde{f}(t,x,v)||\leq C(\mu,M)(1+|v|) \ \ \ \forall (t,x,v)\in [0,T]\times \mathbb{R}^n\times\mathbb{R}^n.
\end{equation*}
For all $(x,v)\in\mathbb{R}^n\times\mathbb{R}^n$ the function $t\longmapsto\widetilde{f}(t,x,v)$ and the map $t\longmapsto D_v\widetilde{f}(t,x,v)$ are Lipschitz continuous by construction. Moreover, by \eqref{lf1} and the definition of $\xi$ one has that
  \begin{equation*}
  \Big|\widetilde{f}(t,x,v)-\widetilde{f}(s,x,v)\Big|=\left| \left (1-\xi\left(\frac{\d(x)}{\sigma}\right)\right) \big[ f(t,x,v)-f(s,x,v)\big]\right|\leq \kappa(1+|v|^2)|t-s|
  \end{equation*}
for all $t$, $s\in [0,T]$, $x\in\mathbb{R}^n$, $v\in\mathbb{R}^n$. 
 Now, we have to prove that \eqref{fvt} holds for $\widetilde{f}$. Indeed, using \eqref{fvt} we deduce that
 		\begin{align*}
 		&\big|D_v\widetilde{f}(t,x,v))-D_v\widetilde{f}(s,x,v))\big|\leq \left|\left (1-\xi\left(\frac{\d(x)}{\sigma}\right)\right)\big[D_{v}f(t,x,v)-D_vf(s,x,v))\big]\right|\\
 		&\leq \kappa(1+|v|)|t-s|,
 		\end{align*}
 for all $t$, $s\in[0,T]$, $x\in\mathbb{R}^n$, $v\in \mathbb{R}^n$. Therefore, $\widetilde{f}$ verifies the assumptions (f0)-(f2).\\
 Finally, by the regularity of $\d$, $\xi$, and $g$ we have that $\widetilde{g}$ is of class $C^1_b(\mathbb{R}^n)$. This completes the proof.
 	\end{proof}
\section{Applications of Theorem \ref{51}}
\subsection{Lipschitz regularity for constrained minimization problems}
Suppose that $f:[0,T]\times U\times \mathbb{R}^n\rightarrow \mathbb{R}$ and $g:U\rightarrow \mathbb{R}$ satisfy the assumptions (f0)-(f2) and (g1), respectively. Let $(t,x)\in [0,T]\times \overline{\Omega}$.
Define $u:[0,T]\times \overline{\Omega}\rightarrow\mathbb{R}$ as the value function of the minimization problem \eqref{M}, i.e.,
\begin{equation}\label{vf}
u(t,x)=\inf_{\begin{array}{c}
	\gamma\in \Gamma\\
	\gamma(t)=x
	\end{array}} \int_{t}^T f(s,\gamma(s),\dot{\gamma}(s)) \,ds + g(\gamma(T)). 
\end{equation}
\begin{proposition}\label{fw}
	Let $\Omega$ be a bounded open subset of $\mathbb{R}^n$ with $C^2$ boundary. Suppose that $f$ and $g$ satisfy (f0)-(f2) and (g1), respectively. Then, $u$ is Lipschitz continuous in $(0,T)\times\overline{\Omega}$.
\end{proposition}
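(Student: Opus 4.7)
The plan is to combine Theorem \ref{51}(iii), which yields the uniform velocity bound $\|\dot\gamma^\star\|_\infty\le L^\star$ on every minimizer of \eqref{M}, with the equivalence provided by Corollary \ref{coro711} between \eqref{M} and its penalized version \eqref{v1}. This equivalence is the decisive ingredient: a naive translation competitor may violate the state constraint and is therefore not admissible for the constrained problem, whereas it is always admissible for the penalized one, at an extra cost that can be controlled by the penalization weights and by the Lipschitz function $d_\Omega$. After extending $f$ and $g$ to $\mathbb{R}^n$ preserving (f0)--(f2) and (g1) as in the end of Section \ref{sec3}, the competitor argument will run freely.

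For the spatial part, I would fix $t\in(0,T)$ and $x,y\in\overline\Omega$; with $\delta$ as in \eqref{deltao} and $\epsilon_1$ as in Lemma \ref{lemma5}, I would introduce the penalized value function
\[
u_\sharp(t,z):=\inf_{\substack{\gamma\in AC(t,T;\mathbb{R}^n)\\ \gamma(t)=z}}\left\{\int_t^T\!\Big[f(s,\gamma,\dot\gamma)+\tfrac1{\epsilon_1}d_\Omega(\gamma)\Big]ds+g(\gamma(T))+\tfrac1\delta d_\Omega(\gamma(T))\right\}.
\]
Repeating the argument behind Corollary \ref{coro711} with initial time $t$ in place of $0$ gives $u_\sharp(t,z)=u(t,z)$ for every $z\in\overline\Omega$. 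I would then pick $\gamma^\star$ optimal for $u(t,x)$, set $\tilde\gamma(s):=\gamma^\star(s)+(y-x)$, and note that $\tilde\gamma(t)=y$, $\dot{\tilde\gamma}=\dot\gamma^\star$, and $d_\Omega(\tilde\gamma(s))\le|y-x|$ since $\gamma^\star\subset\overline\Omega$ and $d_\Omega$ is $1$-Lipschitz. Using $\|\dot\gamma^\star\|_\infty\le L^\star$, the growth \eqref{lx} from Lemma \ref{r3}, the assumption (g1), and the weights $1/\epsilon_1$ and $1/\delta$ on the two distance terms, one obtains $u_\sharp(t,y)-u_\sharp(t,x)\le C|y-x|$ with $C$ depending only on the data together with $L^\star,\epsilon_1,\delta$. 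Swapping the roles of $x$ and $y$ yields the spatial Lipschitz estimate uniformly in $t$.

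For the temporal part, fix $x\in\overline\Omega$ and $0<s<t<T$. Prepending the constant trajectory equal to $x$ on $[s,t]$ to a minimizer of $u(t,x)$ produces an admissible element of $\Gamma[x]$ starting at time $s$; by (f0) its extra cost is at most $M(t-s)$, giving $u(s,x)\le u(t,x)+M(t-s)$. For the reverse inequality, I would take $\gamma^\star$ optimal for $u(s,x)$ and use the dynamic programming identity $u(s,x)=\int_s^t f(r,\gamma^\star,\dot\gamma^\star)\,dr+u(t,\gamma^\star(t))$ to rewrite
\[
u(t,x)-u(s,x)=\bigl[u(t,x)-u(t,\gamma^\star(t))\bigr]-\int_s^t f(r,\gamma^\star(r),\dot\gamma^\star(r))\,dr;
\]
the first bracket is controlled by the spatial Lipschitz constant times $|x-\gamma^\star(t)|\le L^\star(t-s)$, and the integral by $C(1+(L^\star)^2)(t-s)$ via \eqref{l3} and $\|\dot\gamma^\star\|_\infty\le L^\star$. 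The only real obstacle is the spatial estimate, since the state constraint forbids direct translation competitors; the penalized reformulation from Section \ref{sec3}, together with the uniform bound $L^\star$ of Theorem \ref{51}, is exactly what makes the translation argument admissible at a controllable extra cost, after which the temporal bound is a routine application of dynamic programming.
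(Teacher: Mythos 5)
Your proof is correct, but the spatial estimate follows a genuinely different route from the paper's. The paper does not return to the penalized problem at this stage: for $x,y$ in a small ball $B_r(x_0)$ with $B_{4r}(x_0)\subset\Omega$, it builds a competitor that starts at $y$, travels with velocity $\dot\gamma(s)+\frac{x-y}{\tau}$ on $[t,t+\tau]$ with $\tau=|x-y|/(2L^\star)$, and coincides with the optimal $\gamma$ from time $t+\tau$ onward; the bound $\|\dot\gamma\|_\infty\le L^\star$ guarantees this competitor stays in $B_{4r}(x_0)\subset\overline\Omega$, so it is admissible for the constrained problem itself, the interior Lipschitz constant is independent of the ball, and the boundary is then handled by invoking the smoothness of $\Omega$. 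Your translation-through-the-penalization argument buys a cleaner, genuinely global treatment (no interior-to-boundary passage) at the price of having to re-run Lemmas \ref{lemma3.1}--\ref{lemma5} and Corollary \ref{coro711} with initial time $t$ in place of $0$ to justify $u_\sharp(t,\cdot)=u(t,\cdot)$ on $\overline\Omega$. That re-running does work --- the constants $K$, $\delta$ in \eqref{deltao} and $\epsilon_1$ are uniform over subintervals $[t,T]$, and shorter horizons only strengthen the estimates --- but it is a nontrivial verification that should be carried out rather than asserted; note that the paper itself already relies tacitly on the initial-time-$t$ version of Theorem \ref{51}(iii) when it applies $\|\dot\gamma\|_\infty\le L^\star$ to a minimizer at $(t,x)$, so the only additional debt specific to your route is the equality of the constrained and penalized values for every starting time. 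Two further points to make explicit: the translated competitor $\tilde\gamma$ exits $\overline\Omega$, so you genuinely need the extension of $f$ and $g$ to $\mathbb{R}^n$ from Section \ref{sec3} (or to restrict to $|y-x|\le\sigma_0$ and dispose of large $|y-x|$ via the boundedness of $u$); and your constant carries the factors $T/\epsilon_1$ and $1/\delta$, which is harmless since both are fixed by the data. The temporal estimate is essentially identical to the paper's.
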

\begin{proof}
	First, we shall prove that $u$ is Lipschitz continuous in space, uniformly in time.
	Let $x_0\in\Omega$ and choose $0<r<1$ such that $B_r(x_0)\subset B_{2r}(x_0)\subset B_{4r}(x_0)\subset \Omega$. To prove that $u$ is Lipschitz continuous in $B_r(x_0)$, we take $x\neq y$ in $B_r(x_0)$ and t $\in (0,T)$. Let $\gamma$ be an optimal trajectory for $u$ at $(t,x)$ and let $\oh$ be the trajectory defined by
	\begin{align*}
	\begin{cases}
	\oh(t)=y,\\
	\dot{\oh}(s)=\dot{\gamma}(s) +\frac{x-y}{\tau} \ \ &\mbox{if} \ s\in[t,t+\tau] \ \ \text{a.e.},\\
	\dot{\oh}(s)=\dot{\gamma}(s) \ \ &\mbox{otherwise},
	\end{cases}
	\end{align*}
	where $\tau=\frac{|x-y|}{2L^\star}<T-t$.
	We claim that
	\begin{enumerate}
		\item[(a)] $\oh(t+\tau)=\gamma(t+\tau)$;
		\item[(b)] $\oh(s)=\gamma(s)$ for any $s\in[t+\tau,T]$;
		\item[(c)] $|\oh(s)-\gamma(s)|\leq |y-x|$ for any $s \in [t,t+\tau]$;
		\item[(d)] $\oh(s)\in \overline{\Omega}$ for any $s \in [t,T]$.
	\end{enumerate}
	Indeed, by the definition of $\oh$ we have that
	\begin{align*}
	\oh(t+\tau)-\oh(t)=\oh(t+\tau)-y=\int_t^{t+\tau}\Big(\dot{\gamma}(s)+\frac{x-y}{\tau}\Big)\,ds=\gamma(t+\tau)-y,
	\end{align*}
	and this gives (a).\\
	Moreover, by (a), and by the definition of $\oh$ one has that $\oh(s)=\gamma(s)$ for any $s\in[t+\tau,T]$. Hence, $\oh$ verifies (b).\\
	By the definition of $\oh$, for any $s\in [t,t+\tau]$ we obtain that
	\begin{align*}
	\Big|\oh(s)-\gamma(s)\Big|\leq\Big|y-x+\int_t^s (\dot\oh(\sigma)- \dot{\gamma}(\sigma)) \,d\sigma\Big|=\Big|y-x+ \int_t^{s}\frac{x-y}{\tau} \,d\sigma\Big|\leq |y-x|
	\end{align*}
	and so (c) holds.\\
	Since $\gamma$ is an optimal trajectory for $u$ and by $\oh(s)=\gamma(s)$ for all $s\in [t+\tau,T]$, we only have to prove that $\oh(s)$ belongs to $\overline{\Omega}$ for all $s\in[t,t+\tau]$. Let $s\in [t,t+\tau]$, by Theorem \ref{51} one has that
	\begin{align*}
	&|\oh(s)-x_0|\leq|\oh(s)-y|+|y-x_0|\leq\left|\int_t^s \dot{\oh}(\sigma)\,d\sigma\right|+r\leq \int_t^s \ \Big|\dot{\gamma}(\sigma)+\frac{x-y}{\tau}\Big|\,d\sigma+r\\
	&\leq \int_t^s \Big[ |\dot{\gamma}(\sigma)|+ \frac{|x-y|}{\tau}\Big]\,d\sigma+r 
	\leq L^\star (s-t)+\frac{|x-y|}{\tau}(s-t)+r\leq L^\star\tau +|x-y|+r.
	\end{align*}
	Recalling that $\tau=\frac{|x-y|}{2L^\star}$ one has that
	\begin{equation*}
	|\oh(s)-x_0|\leq \frac{|x-y|}{2}+|x-y|+r\leq 4r.
	\end{equation*}
	Therefore, $\oh(s)\in B_{4r}(x_0)\subset \overline{\Omega}$ for all $s\in[t,t+\tau]$.\\
	Using the dynamic programming principle, by (a) one has that
	\begin{equation}
	u(t,y)\leq \int_t^{t+\tau} f(s,\oh(s),\dot{\oh}(s))\,ds + u(t+\tau,\gamma(t+\tau)).
	\end{equation}
	Since $\gamma$ is an optimal trajectory for $u$ at $(t,x)$, we obtain that
	\begin{equation*}
	u(t,y)\leq u(t,x) +\int_t^{t+\tau} \Big[f(s,\oh(s),\dot\oh(s))-f(s,\gamma(s),\dot{\gamma}(s))\Big] \,ds.
	\end{equation*}
	By \eqref{l4}, \eqref{lx}, and the definition of $\oh$, for $s\in [t,t+\tau]$ we have that
	\begin{align*}
	&|f(s,\oh(s),\dot{\oh}(s))-f(s,\gamma(s),\dot{\gamma}(s))|\\
	&\leq|f(s,\oh(s),\dot{\oh}(s))-f(s,\oh(s),\dot{\gamma}(s))|+|f(s,\oh(s),\dot{\gamma}(s))-f(s,\gamma(s),\dot{\gamma}(s))|\\	
	&\leq \int_0^1 |\langle D_vf(s,\oh(s),\lambda\dot{\oh}(s)+(1-\lambda)\dot{\gamma}(s)),\dot{\oh}(s)-\dot{\gamma}(s)\rangle|\,d\lambda\\
	& + \int_0^1|D_xf(s,\lambda\oh(s)+(1-\lambda)\gamma(s),\dot{\gamma}(s)),\oh(s)-\gamma(s)\rangle|\,d\lambda\\
	&\leq C(\mu,M)|\dot{\oh}(s)-\dot{\gamma}(s)|\int_0^1 (1+|\lambda\dot{\oh}(s)+(1-\lambda)\dot{\gamma}(s)|)\,d\lambda \\
	&+ C(\mu,M)|\oh(s)-\gamma(s)|\int_0^1(1+ |\dot{\gamma}(s)|^2)\,d\lambda.
	\end{align*}
	By Theorem \ref{51} one has that
	\begin{align}
	&\int_0^1 (1+|\lambda\dot{\oh}(s)+(1-\lambda)\dot{\gamma}(s)|)\,d\lambda\leq 1+4L^\star,\label{1e}\\
	&\int_0^1(1+ |\dot{\gamma}(s)|^2)\,d\lambda\leq 1+(L^\star)^2.\label{21e}
	\end{align}	
	Using \eqref{1e}, \eqref{21e}, and (c), by the definition of $\overline{\gamma}$ one has that
	\begin{equation}\label{bl1}
	|f(s,\oh(s),\dot{\oh}(s))-f(s,\gamma(s),\dot{\gamma}(s))|\leq C(\mu,M)(1+4L^\star)\frac{|x-y|}{\tau}+C(\mu,M)(1+(L^\star)^2)|x-y|,
	\end{equation}
	for a.e. $s\in[t,t+\tau]$.\\
	By \eqref{bl1}, and the choice of $\tau$ we deduce that
	\begin{align*}
	&u(t,y)\leq u(t,x) + C(\mu,M)(1+4L^\star)\int_t^{t+\tau}\frac{|x-y|}{\tau}  \,ds+ C(\mu,M)(1+(L^\star)^2)\int_t^{t+\tau} |x-y|\,ds\\
	&\leq u(t,x) + C(\mu,M)(1+4L^\star)\big|x-y\big|+\tau C(\mu,M)(1+(L^\star)^2)\big|x-y\big|\leq u(t,x)+C_{L^\star}|x-y|
	\end{align*}		
	where $C_{L^\star}=C(\mu,M)(1+4L^\star)+\frac{1}{2L^\star}C(\mu,M)(1+(L^\star)^2)$. Thus, $u$ is locally Lipschitz continuous in space and one has that $||Du||_\infty\leq \vartheta$, where $\vartheta$ is a constant not depending on $\Omega$. Owing to the smoothness of $\Omega$, $u$ is globally Lipschitz continuous in space.\\
	Let $x \in \overline\Omega$. Let $t_1$, $t_2 \in (0,T)$ and,
	without loss of generality, suppose that $t_2\geq t_1$. Let $\gamma$ be an optimal trajectory for $u$ at $(t_1,x)$.
	Then, 
	\begin{equation}\label{3e}
	|u(t_2,x)-u(t_1,x)|\leq |u(t_2,x)-u(t_2,\gamma(t_2))|+|u(t_2,\gamma(t_2))-u(t_1,x)|.
	\end{equation}
	The first term on the right-side of \eqref{3e} can be estimated using the Lipschitz continuity in space of $u$ and Theorem \ref{51}. Indeed, we get
	\begin{equation}\label{4}
	|u(t_2,x)-u(t_2,\gamma(t_2))|\leq C_{L^\star}|x-\gamma(t_2)| \leq  C_{L^\star}\int_{t_1}^{t_2}|\dot{\gamma}(s)|\,ds\leq L^\star C_{L^\star} (t_2-t_1).
	\end{equation}
	We only have to estimate the second term on the right-side of \eqref{3e}.
	By dynamic programming principle, \eqref{l3}, and the assumptions on $F$ we deduce that
	\begin{align}\label{5}
	|u(t_2,\gamma(t_2))-u(t_1,x)|&\leq \Big |\int_{t_1}^{t_2}f(s,\gamma(s),\dot{\gamma}(s))\,ds\Big|\leq \int_{t_1}^{t_2}|f(s,\gamma(s),\dot{\gamma}(s))|\,ds\\
	&\leq \int_{t_1}^{t_2} \Big[C(\mu,M)+ 4\mu |\dot{\gamma}(s)|^2\Big]\,ds\leq \Big[C(\mu,M)+4\mu L^\star\Big] (t_2-t_1)\nonumber
	\end{align}
	Using \eqref{4} and \eqref{5} in \eqref{3e}, we obtain that $u$ is Lipschitz continuous in time. This completes the proof.
\end{proof}
 \subsection{Lipschitz regularity for constrained MFG equilibria}
 	 In this section we want to apply Theorem \ref{51} to a mean field game (MFG) problem with state constraints. Such a problem was studied in \cite{cc}, where the existence and uniqueness of constrained equilibria was obtained under fairly general assumptions on the data. Here, we will apply our necessary conditions to deduce the existence of more regular equilibria than those constructed in \cite{cc}, assuming the data $F$ and $G$ to be Lipschitz continuous.\\
 	 \underline{\textit{Assumptions}}\\
 	 Let $\Omega$ be a bounded open subset of $\mathbb{R}^n$ with $C^2$ boundary. Let $\mathcal{P}(\overline{\Omega})$  be the set of all Borel probability measures on $\overline\Omega$ endowed with the Kantorovich-Rubinstein distance $d_1$ defined in \eqref{2.7}. Let $U$ be an open subset of $\mathbb{R}^n$ and such that $\overline{\Omega}\subset U$. Assume that $F:U\times\mathcal{P}(\overline{\Omega})\rightarrow \mathbb{R}$ and $G:U\times \mathcal{P}(\overline{\Omega})\rightarrow \mathbb{R}$ satisfy the following hypotheses.
\begin{enumerate}
\item[(D1)] For all $x\in U$, the functions $m\longmapsto F(x,m)$ and  $m\longmapsto G(x,m)$ are Lipschitz continuous, i.e., there exists a constant $\kappa\geq 0$ such that
 	 	\begin{align}
 	 	|F(x,m_1)-F(x,m_2)|+ |G(x,m_1)-G(x,m_2)| \leq \kappa d_1(m_1,m_2),\label{lf} 
 	  	 	\end{align}
 	 	for any $m_1$, $m_2 \in\mathcal{P}(\overline{\Omega})$.
\item[(D2)] For all $m\in \mathcal{P}(\overline{\Omega})$, the functions $x\longmapsto G(x,m)$ and $x\longmapsto F(x,m)$ belong to $C^1_b(U)$. Moreover
\begin{equation*}
|D_xF(x,m)|+|D_xG(x,m)|\leq \kappa, \ \ \ \ \forall \ x\in U, \ \forall \ m\in \mathcal{P}(\overline{\Omega}).
\end{equation*}
\end{enumerate}
\noindent
 	 Let $L:U\times\mathbb{R}^n\rightarrow \mathbb{R}$ be a function that satisfies the following assumptions.
 	 \begin{enumerate}
 	 \item[(L0)] $L\in C^1(U\times \mathbb{R}^n)$ and there exists a constant $M\geq 0$ such that
 	 \begin{equation}\label{bml}
 	 |L(x,0)|+|D_xL(x,0)|+|D_vL(x,0)|\leq M, \ \ \ \ \forall \ x\in U.
 	 \end{equation}
 	 \item[(L1)] $D_vL$ is differentiable on $U\times\mathbb{R}^n$ and there exists a constant $\mu\geq 1$ such that
 	 \begin{align}
 	& \frac{I}{\mu} \leq D^2_{vv}L(x,v)\leq I\mu,\label{lh1}\\
 	& ||D_{vx}^2L(x,v)||\leq \mu(1+|v|),\label{c6}
 	 \end{align}
for all $(x,v)\in U\times \mathbb{R}^n$.
\end{enumerate}
\begin{remark}
(i) $F$, $G$ and $L$ are assumed to be defined on $U\times \mathcal{P}(\overline{\Omega})$ and on $U\times \mathbb{R}^n$, respectively, just for simplicity. All the results of this section hold true if we replace $U$ by $\overline{\Omega}$. This fact can be easily checked by using well-known extension techniques (see, e.g. \cite[Theorem 4.26]{adams}).\\
(ii) Arguing as Lemma \ref{r3} we deduce that there exists a positive constant $C(\mu,M)$ that dependes only on $M$, $\mu$ such that
\begin{align}
&|D_xL(x,v)|\leq C(\mu,M)(1+|v|^2),\label{c4}\\
&|D_vL(x,v)|\leq C(\mu,M)(1+|v|),\label{l41},\\
&\frac{|v|^2}{4\mu}-C(\mu,M) \leq L(x,v)\leq 4\mu|v|^2 +C(\mu,M),\label{l31}
\end{align}
for all $(x,v)\in U\times\mathbb{R}^n$.
\end{remark}
%
 	\noindent
Let $m\in \L(0,T;\mathcal{P}(\overline{\Omega}))$. If we set $f(t,x,v)=L(x,v)+F(x,m(t))$, then the associated Hamiltonian $H$ takes the form 
 	 	$$
 	 	H(t,x,p)=\hl(x,p)-F(x,m(t)),  \ \  \ \forall\ (t,x,p)\in[0,T]\times U\times\mathbb{R}^n,
 	 	$$ 
 	 	where
 	 \begin{equation*}
 	 \hl(x,p)=\sup_{v\in\mathbb{R}^n}\Big\{-\langle p,v\rangle-L(x,v)\Big\},\ \  \ \ \ \forall\ (x,p)\in U\times\mathbb{R}^n.
 	 \end{equation*}
The assumptions on $L$ imply that $\hl$ satisfies the following conditions.
 	 	\begin{enumerate}
 	 \item[1.] 
 	 	$H_L\in C^1(U\times \mathbb{R}^n)$ and there exists a constant $M'\geq 0$ such that
 \begin{equation}
 |\hl(x,0)|+|D_x\hl(x,0)|+|D_p\hl(x,0)|\leq M', \ \ \ \ \forall x\in U.
 \end{equation}
\item[2.] $D_p\hl$ is differentiable on $U\times\mathbb{R}^n$ and satisfies
 	 	\begin{align}
 	 	 &	\frac{I}{\mu}\leq D_{pp}\hl(x,p)\leq I\mu, \ \ \ \ \ \ \ \ \ \ \ \ \ \ \forall \ (x,p)\in U\times\mathbb{R}^n,\label{h1}\\
 	   &||D_{px}^2  \hl(x,p)||\leq C(\mu,M')(1+|p|), \ \ \ \forall \ (x,p)\in U\times \mathbb{R}^n,
 	 	 	\end{align}
where $\mu$ is the constant in (L1) and $C(\mu,M')$ depends only on $\mu$ and $M'$. 
 	 	\end{enumerate}
 	 \noindent  
 	 For any $t\in [0,T]$, we denote by $e_t:\Gamma\to \overline \Omega$ the evaluation map defined by 
 	 \begin{equation*}
 	 e_t(\gamma)= \gamma(t), \ \ \ \ \forall \gamma\in\Gamma.
 	 \end{equation*}
 	 For any $\eta\in\mathcal{P}(\Gamma)$, we define 
 	 \begin{equation*}
 	 m^\eta(t)=e_t\sharp\eta \ \ \ \ \forall t\in [0,T].
 	 \end{equation*}
 	  \begin{remark}\label{r45}
 	  	We observe that for any $\eta\in\mathcal{P}(\Gamma)$, the following holds true (see \cite{cc} for a proof).
 	  	\begin{enumerate}
 	  		\item[(i)] $m^\eta\in C([0,T];\mathcal{P}(\overline{\Omega}))$.
 	  		\item[(ii)] Let $\eta_i$, $\eta\in\mathcal{P}(\Gamma)$, $i\geq 1$, be such that $\eta_i$ is narrowly convergent to $\eta$. Then $m^{\eta_i}(t)$ is narrowly convergent to $m^\eta(t)$ for all $t\in[0,T]$.
 	  	\end{enumerate}
 	  \end{remark}
 	 \noindent
 	 For any fixed $m_0\in\mathcal{P}(\overline{\Omega})$, we denote by ${\mathcal P}_{m_0}(\Gamma)$ the set of all Borel probability measures $\eta$ on $\Gamma$ such that $e_0\sharp \eta=m_0$.
 	 For all $\eta \in \mathcal{P}_{m_0}(\Gamma)$, we set
 	 \begin{equation*}
 	 J_\eta [\gamma]=\int_0^T \Big[L(\gamma(t),\dot \gamma(t))+ F(\gamma(t),m^\eta(t))\Big]\ dt + G(\gamma(T),m^\eta(T)), \ \ \ \ \  \forall \gamma\in\Gamma.
 	 \end{equation*}
For all $x \in \overline{\Omega}$ and $\eta\in\mathcal{P}_{m_0}(\Gamma)$, we define
 	   \begin{equation*}
 	   \Gamma^\eta[x]=\Big\{ \gamma\in\Gamma[x]:J_\eta[\gamma]=\min_{\Gamma[x]} J_\eta\Big\}.
 	   \end{equation*}
It is shown in \cite{cc} that, for every $\eta\in \mathcal{P}_{m_0}(\Gamma)$, the set $\Gamma^\eta[x]$ is nonempty and $\Gamma^\eta[\cdot]$ has closed graph.\\
 	   We recall the definition of constrained MFG equilibria for $m_0$, given in \cite{cc}.
 	   \begin{definition}
 	   	Let $m_0\in\mathcal{P}(\overline{\Omega})$. We say that $\eta\in\mathcal{P}_{m_0}(\Gamma)$ is a contrained MFG equilibrium for $m_0$ if
 	   	\begin{equation*}
 	   	supp(\eta)\subseteq \bigcup_{x\in\overline{\Omega}} \Gamma^\eta[x].
 	   	\end{equation*}
 	   \end{definition}
 	   \noindent
Let $\Gamma'$ be a nonempty subset of $\Gamma$. We denote by $\mathcal{P}_{m_0}(\Gamma')$ the set of all Borel probability measures $\eta$ on $\Gamma'$ such that $e_0\sharp\eta=m_0$. We now introduce special subfamilies of $\mathcal{P}_{m_0}(\Gamma)$ that play a key role in what follows.
 	  \begin{definition}	
 	  Let $\Gamma'$ be a nonempty subset of $\Gamma$. We define by $\mathcal{P}_{m_0}^{\L}(\Gamma')$ the set of $\eta\in\mathcal{P}_{m_0}(\Gamma')$ such that $m^\eta(t)=e_t\sharp \eta$ is Lipschitz continuous, i.e., 
 	  \begin{equation*}
 	  \mathcal{P}_{m_0}^{\L}(\Gamma')=\{\eta\in\mathcal{P}_{m_0}(\Gamma'): m^\eta \in \L(0,T;\mathcal{P}(\overline{\Omega}))\}.
 	  \end{equation*}
 	  \end{definition}
 	  \begin{remark}\label{nonempty}
 	  We note that $\mathcal{P}^{\L}_{m_0}(\Gamma)$ is a nonempty convex set. Indeed, let $j:\overline{\Omega}\rightarrow \Gamma$ be the continuous map defined by
 	  	$$
 	  	j(x)(t)=x \ \ \ \ \forall t \in[0,T].
 	  	$$
 	  	Then,
 	  	$$
 	  	\eta :=j\sharp m_0
 	  	$$
 	  	is a Borel probability measure on $\Gamma$ and $\eta \in\mathcal{P}^{\L}_{m_0}(\Gamma)$.\\
 	  	In order to show that $\lo$ is convex, let $\{\eta_i\}_{i=1,2}\subset \mathcal{P}_{m_0}^{\L}(\Gamma)$ and let $\lambda_1$, $\lambda_2\geq 0$ be such that $\lambda_1+\lambda_2=1$. Since $\eta_i$ are Borel probability measures, $\eta:=\lambda\eta_1+(1-\lambda)\eta_2$ is a Borel probability measure as well.  Moreover, for any Borel set $B\in \mathscr{B}(\overline{\Omega})$ we have that
 	  	\begin{equation*}
 	  	e_0 \sharp \eta (B)=\eta (e_0^{-1}(B))= \sum_{i=1}^{2} \lambda_i \eta_i(e_0^{-1}(B))=\sum_{i=1}^{2} \lambda_i e_0 \sharp \eta_i(B)=\sum_{i=1}^{2} \lambda_i m_0(B) =m_0 (B).
 	  	\end{equation*}
 	  	So, $\eta\in\mathcal{P}_{m_0}(\Gamma)$. Since $m^{\eta_1}$, $m^{\eta_2}\in \L(0,T;\mathcal{P}(\overline{\Omega}))$, we have that $m^\eta(t)=\lambda_1m^{\eta_1}(t)+\lambda_2m^{\eta_2}(t)$ belongs to $\L(0,T;\mathcal{P}(\overline{\Omega}))$. 
\end{remark}
\noindent
In the next result, we apply Theorem \ref{51} to prove a useful property of minimizers of $J_\eta$.
\begin{proposition}\label{pr1}
Let $\Omega$ be a bounded open subset of $\mathbb{R}^n$ with $C^2$ boundary and let $m_0\in\mathcal{P}(\overline{\Omega})$. Suppose that (L0), (L1), (D1), and (D2) hold true. Let $\eta\in\mathcal{P}^{\L}_{m_0}(\Gamma)$ and fix $x\in\overline{\Omega}$. Then $\Gamma^\eta[x]\subset C^{1,1}([0,T];\mathbb{R}^n)$ and 
\begin{equation}\label{l0}
||\dot{\gamma}||_\infty\leq L_0, \ \ \ \forall \gamma\in \Gamma^\eta[x],
\end{equation}
where $L_0=L_0(\mu,M',M,\kappa,T, ||G||_\infty,||DG||_\infty)$.
\end{proposition}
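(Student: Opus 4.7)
The plan is to apply Theorem \ref{51} directly to the variational problem defining $\Gamma^\eta[x]$. Fix $\eta \in \mathcal{P}_{m_0}^{\L}(\Gamma)$ and set
\[
f(t,x,v) := L(x,v) + F(x, m^\eta(t)), \qquad g(x) := G(x, m^\eta(T)).
\]
Then $J_\eta[\gamma] = \int_0^T f(t,\gamma(t), \dot\gamma(t))\,dt + g(\gamma(T))$, so $\Gamma^\eta[x]$ coincides with the set $\mathcal{X}[x]$ of minimizers in problem \eqref{M} for this particular $f$ and $g$. The conclusion will follow at once from Theorem \ref{51}(i)(iii) as soon as (g1) and (f0)--(f2) are checked with constants controlled by $\mu$, $M$, $\kappa$, $T$, $||G||_\infty$, $||DG||_\infty$ (and, implicitly, the Lipschitz constant $\L(m^\eta)$).

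The verification is essentially bookkeeping. Condition (g1) is immediate from (D2) applied at $m = m^\eta(T)$, giving $||g||_\infty \leq ||G||_\infty$ and $||Dg||_\infty \leq \kappa$. For (f0), continuity of $f$ follows from continuity of $L$, from the continuity of $m \mapsto F(x,m)$ provided by (D1), and from the continuity of $t \mapsto m^\eta(t)$ stated in Remark \ref{r45}(i); differentiability of $(x,v) \mapsto f(t,x,v)$ and continuity of $D_xf$, $D_vf$ are granted by (L0), (L1), and (D2). The bound \eqref{bm} at $v=0$ reduces to controlling $|L(x,0)|$, $|D_xL(x,0)|$, $|D_vL(x,0)|$ (by (L0)), $|D_xF(x,m^\eta(t))| \leq \kappa$ (by (D2)), and $|F(x,m^\eta(t))|$, which is uniformly bounded by combining (D1) at a reference measure with (D2).

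For (f1), since $f$ depends on $v$ only through $L$, one has $D_{vv}^2 f = D_{vv}^2 L$ and $D_{vx}^2 f = D_{vx}^2 L$, so \eqref{f2} and \eqref{fvx} follow directly from (L1) with the same $\mu$. For (f2), $L$ being $t$-independent yields
\[
|f(t,x,v) - f(s,x,v)| = |F(x, m^\eta(t)) - F(x, m^\eta(s))| \leq \kappa\, d_1(m^\eta(t), m^\eta(s)) \leq \kappa\, \L(m^\eta)\,|t-s|,
\]
so \eqref{lf1} holds with constant $\kappa\,\L(m^\eta)$, while $D_v f = D_v L$ is $t$-independent so \eqref{fvt} is trivially satisfied.

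With all hypotheses of Theorem \ref{51} met, its conclusions give that every $\gamma \in \Gamma^\eta[x]$ lies in $C^{1,1}([0,T];\overline{\Omega})$ and satisfies $||\dot\gamma||_\infty \leq L^\star$, where $L^\star$ depends on the constants appearing in (f0)--(f2) and (g1). Using $||g||_\infty \leq ||G||_\infty$ and $||Dg||_\infty \leq ||DG||_\infty$, this $L^\star$ is controlled by the quantities listed in the statement, and we may set $L_0 := L^\star$ to obtain \eqref{l0}. There is no substantive obstacle: the analytical content lives entirely in Theorem \ref{51}, and the task here is only to translate the MFG cost into its framework and to keep careful track of how the Lipschitz constant $\L(m^\eta)$ enters through the ``$\kappa$'' of (f2).
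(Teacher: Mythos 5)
Your proof is correct and follows exactly the paper's route: the paper's own argument is a one-line reduction to Theorem \ref{51} with $f(t,x,v)=L(x,v)+F(x,m^\eta(t))$ and $g=G(\cdot,m^\eta(T))$, asserting that the hypotheses ``can easily be checked'', which is precisely the bookkeeping you carry out. Your observation that the effective constant in (f2) is $\kappa\,\L(m^\eta)$, so that $L_0$ a priori also depends on $\L(m^\eta)$, is a real subtlety that the paper's statement and proof pass over in silence; you are, if anything, more careful than the source on this point.
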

\begin{proof}
	Let $\eta\in\mathcal{P}_{m_0}^{\L}(\Gamma)$, $x\in\overline{\Omega}$ and $\gamma\in \Gamma^\eta[x]$. Since $m\in \L(0,T;\mathcal{P}(\overline{\Omega}))$, taking $f(t,x,v)=L(x,v)+F(x,m(t))$, one can easly check that all the assumptions of Theorem \ref{51} are satisfied by $f$ and $G$. Therefore, we have that $\Gamma^\eta[x]\subset C^{1,1}([0,T];\mathbb{R}^n)$ and, in this case, \eqref{lstar} becomes
	\begin{equation*}
	||\dot{\gamma}||_\infty\leq L_0, \ \ \ \forall \gamma\in \Gamma^\eta[x],
	\end{equation*}
	where $L_0=L_0(\mu,M',M,\kappa,T, ||G||_\infty,||DG||_\infty)$.
	\end{proof}	 
	 \noindent
	 We denote by $\Gamma_{L_0}$ the set of $\gamma\in\Gamma$ such that \eqref{l0} holds, i.e.,
	 \begin{equation}\label{tgamma}
	 \Gamma_{L_0}=\{\gamma \in \Gamma:||\dot\gamma||_\infty\leq L_0\}.
	 \end{equation}
\begin{lemma}\label{lemmaut}
Let $m_0\in \mathcal{P}(\overline{\Omega})$. Then, $\mathcal{P}_{m_0}^{\L}(\Gamma_{L_0})$ is a nonempty convex compact subset of $\mathcal{P}_{m_0}(\Gamma)$. Moreover, for every $\eta\in\mathcal{P}_{m_0}(\Gamma_{L_0})$, $m^\eta(t):=e_t\sharp \eta$ is Lipschitz continuous of constant $L_0$, where $L_0$ is as in Proposition \ref{pr1}.
\end{lemma}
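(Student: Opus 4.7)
My plan is to first establish the \emph{Moreover} statement, which will identify $\mathcal{P}_{m_0}^{\L}(\Gamma_{L_0})$ with the slightly simpler space $\mathcal{P}_{m_0}(\Gamma_{L_0})$, and then verify nonemptiness, convexity, and compactness on the latter space. For the Moreover part, fix $\eta\in\mathcal{P}_{m_0}(\Gamma_{L_0})$ and $s,t\in[0,T]$. Using the Kantorovich-Rubinstein characterization \eqref{2.7}, for any $1$-Lipschitz $f:\overline{\Omega}\to\mathbb{R}$ I would write
\begin{equation*}
\int_{\overline{\Omega}} f\,dm^\eta(t)-\int_{\overline{\Omega}} f\,dm^\eta(s)=\int_\Gamma \bigl(f(\gamma(t))-f(\gamma(s))\bigr)\,d\eta(\gamma)\leq \int_\Gamma |\gamma(t)-\gamma(s)|\,d\eta(\gamma)\leq L_0|t-s|,
\end{equation*}
where the last inequality uses that $\eta$ is concentrated on $\Gamma_{L_0}$, on which each curve is $L_0$-Lipschitz by \eqref{tgamma}. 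Taking the supremum over $f$ yields $d_1(m^\eta(t),m^\eta(s))\leq L_0|t-s|$, so $m^\eta\in\L(0,T;\mathcal{P}(\overline{\Omega}))$ with Lipschitz constant $L_0$; in particular, $\mathcal{P}_{m_0}^{\L}(\Gamma_{L_0})=\mathcal{P}_{m_0}(\Gamma_{L_0})$.

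Nonemptiness follows from the argument in Remark \ref{nonempty}: the constant-trajectory embedding $j:\overline{\Omega}\to\Gamma$, $j(x)(t)=x$, takes values in $\Gamma_{L_0}$ (zero velocity), and $j\sharp m_0$ has first marginal $m_0$. Convexity is also identical to Remark \ref{nonempty}: a convex combination of two measures supported on $\Gamma_{L_0}$ remains supported on $\Gamma_{L_0}$, and the marginal condition $e_0\sharp\eta=m_0$ is preserved under convex combinations.

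The core of the lemma is compactness, and I would argue in three steps. First, $\Gamma_{L_0}$ is a compact subset of $\Gamma$ with respect to uniform convergence: its elements take values in the compact set $\overline{\Omega}$ and are uniformly $L_0$-Lipschitz, so Arzel\`{a}-Ascoli provides relative compactness, and closedness follows because a uniform limit of $L_0$-Lipschitz $\overline{\Omega}$-valued curves is itself $L_0$-Lipschitz and $\overline{\Omega}$-valued. Second, since $\Gamma_{L_0}$ is a compact metric space, Prokhorov's theorem (equivalently, automatic tightness) gives that $\mathcal{P}(\Gamma_{L_0})$ is compact in the narrow topology. Third, the constraint $e_0\sharp\eta=m_0$ defines a closed subset: if $\eta_i\to\eta$ narrowly in $\mathcal{P}(\Gamma_{L_0})$, then for every $\varphi\in C_b(\overline{\Omega})$ one has $\int_{\overline{\Omega}} \varphi\,d(e_0\sharp\eta_i)=\int_\Gamma \varphi\circ e_0\,d\eta_i\to \int_\Gamma \varphi\circ e_0\,d\eta=\int_{\overline{\Omega}} \varphi\,d(e_0\sharp\eta)$ by continuity of $e_0$, so $e_0\sharp\eta_i\rightharpoonup e_0\sharp\eta$ and the limit still has first marginal $m_0$.

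The only genuine point to check is the closedness of $\Gamma_{L_0}$ in the uniform topology, which reduces to passing to the limit in the inequality $|\gamma_k(t)-\gamma_k(s)|\leq L_0|t-s|$; everything else is a routine combination of Arzel\`{a}-Ascoli, Prokhorov, and continuity of the evaluation map $e_0$.
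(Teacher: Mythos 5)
Your proof is correct and follows essentially the same route as the paper: the Lipschitz estimate for $m^\eta$ via the Kantorovich--Rubinstein duality is the identical computation, and nonemptiness and convexity are handled exactly as in Remark \ref{nonempty}. The only difference is that you spell out the compactness step (Arzel\`{a}--Ascoli for $\Gamma_{L_0}$, Prokhorov, closedness of the marginal constraint), where the paper simply asserts it from the compact embedding of $\Gamma_{L_0}$ in $\Gamma$; your added observation that $\mathcal{P}_{m_0}^{\L}(\Gamma_{L_0})=\mathcal{P}_{m_0}(\Gamma_{L_0})$ is implicit in the paper's argument and is a clean way to organize it.
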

\begin{proof}
Arguing as in Remark \ref{nonempty}, we obtain that $\mathcal{P}_{m_0}^{\L}(\Gamma_{L_0})$ is a nonempty convex set.
Moreover, since $\Gamma_{L_0}$ is compactly embedded in $\Gamma$, one has that $\mathcal{P}_{m_0}^{\L}(\Gamma_{L_0})$ is compact.\\
Let $\eta\in\mathcal{P}_{m_0}(\Gamma_{L_0})$ and $m^\eta(t)=e_t\sharp\eta$. For any $t_1,t_2\in[0,T]$, we recall that
\begin{equation*}
d_1(m^\eta(t_2),m^\eta(t_1))=\sup\Big\{\int_{\overline{\Omega}} \phi(x)(m^\eta(t_2, \,dx)-m^\eta(t_1,\,dx))\ \Big|\ \phi:\overline{\Omega}\rightarrow\mathbb{R}\ \ \mbox{is 1-Lipschitz} \Big\}. 
\end{equation*}
Since $\phi$ is 1-Lipschitz continuous, one has that
\begin{align*}
&\int_{\overline\Omega} \phi(x)\,(m^\eta(t_2,dx)-m^\eta(t_1,dx))=\int_{\Gamma}\Big[ \phi(e_{t_2}(\gamma))-\phi(e_{t_1}(\gamma))\Big] \,d\eta(\gamma)\\
&=\int_{\Gamma} \Big[\phi(\gamma(t_2))-\phi(\gamma(t_1))\Big] \,d\eta(\gamma) \leq \int_{\Gamma} |\gamma(t_2)-\gamma(t_1)|\,d\eta(\gamma).
\end{align*}
Since $\eta \in \mathcal{P}_{m_0}(\Gamma_{L_0})$, we deduce that
\begin{align*}
\int_{\Gamma} |\gamma(t_2)-\gamma(t_1)|\,d\eta(\gamma)\leq L_0\int_{\Gamma} |t_2-t_1|\,d\eta(\gamma)=L_0|t_2-t_1|
\end{align*}
and so $m^\eta(t)$ is Lipschitz continuous of constant $L_0$.
\end{proof}
\noindent
In the next result, we deduce the existence of more regular equilibria than those constructed in \cite{cc}.

 	 \begin{theorem}\label{teoresist}
 	 	Let $\Omega$ be a bounded open subset of $\mathbb{R}^n$ with $C^2$ boundary and $m_0\in\mathcal{P}(\overline{\Omega})$. Suppose that (L0), (L1), (D1), and (D2) hold true. Then, there exists at least one constrained MFG equilibrium $\eta \in\lo$.
 	 \end{theorem}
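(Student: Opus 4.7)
My plan is to obtain the equilibrium by a Kakutani--Fan--Glicksberg fixed point argument, carried out on the compact convex set furnished by Lemma \ref{lemmaut}. The key gain with respect to \cite{cc} is Proposition \ref{pr1}, which furnishes a uniform bound $L_0$ on $\|\dot\gamma\|_\infty$ for every minimizer of $J_\eta$ over $\Gamma[x]$. This forces the best-reply map to take values in the compact set $\Gamma_{L_0}$, and hence to preserve Lipschitz continuity of marginals, which is precisely what we need to land inside $\lo$.

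More precisely, set $\mathcal{K} := \mathcal{P}_{m_0}^{\L}(\Gamma_{L_0})$. By Lemma \ref{lemmaut}, $\mathcal{K}$ is a nonempty, convex, compact subset of $\mathcal{P}(\Gamma)$ (with respect to narrow convergence), and every $\eta \in \mathcal{K}$ satisfies $m^\eta \in \L(0,T;\mathcal{P}(\overline{\Omega}))$, so that $J_\eta$ is a well-defined cost. Define the set-valued map $E \colon \mathcal{K} \rightrightarrows \mathcal{K}$ by
$$
E(\eta) = \Big\{\tilde\eta \in \mathcal{P}_{m_0}(\Gamma) \;:\; \operatorname{supp}(\tilde\eta) \subseteq \bigcup_{x \in \overline{\Omega}} \Gamma^\eta[x]\Big\}.
$$
Proposition \ref{pr1} gives $\Gamma^\eta[x] \subseteq \Gamma_{L_0}$ for every $x \in \overline{\Omega}$ and every $\eta \in \mathcal{K}$, so by Lemma \ref{lemmaut} indeed $E(\eta) \subseteq \mathcal{K}$, and a fixed point of $E$ is exactly a constrained MFG equilibrium lying in $\lo$.

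I would then verify the three hypotheses of Kakutani--Fan--Glicksberg. First, $E(\eta)$ is \textbf{nonempty}: since $x \mapsto \Gamma^\eta[x]$ takes nonempty values and has closed graph (recalled from \cite{cc}), the Kuratowski--Ryll-Nardzewski selection theorem produces a Borel measurable selection $x \mapsto \gamma_x \in \Gamma^\eta[x]$, and pushing $m_0$ forward through this selection yields an element of $E(\eta)$. Second, $E(\eta)$ is \textbf{convex}: if $\tilde\eta_1, \tilde\eta_2 \in E(\eta)$ and $\lambda \in [0,1]$, then the convex combination has marginal $m_0$ at $t=0$ and support contained in $\operatorname{supp}(\tilde\eta_1) \cup \operatorname{supp}(\tilde\eta_2) \subseteq \bigcup_x \Gamma^\eta[x]$. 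Third, $E$ has \textbf{closed graph}, which is the main technical point discussed below.

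The hard step is the closed graph property. Suppose $(\eta_n, \tilde\eta_n)$ is in the graph of $E$, with $\eta_n \to \eta$ and $\tilde\eta_n \to \tilde\eta$ narrowly in $\mathcal{K}$. Fix $\gamma \in \operatorname{supp}(\tilde\eta)$; using compactness of $\Gamma_{L_0}$ together with a standard support-approximation argument, I would extract $\gamma_n \in \operatorname{supp}(\tilde\eta_n)$ with $\gamma_n \to \gamma$ uniformly, so that $x_n := \gamma_n(0) \to x := \gamma(0)$. By construction $\gamma_n \in \Gamma^{\eta_n}[x_n]$, i.e. $J_{\eta_n}[\gamma_n] \leq J_{\eta_n}[\zeta_n]$ for every admissible $\zeta_n \in \Gamma[x_n]$. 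The delicate part is to pass to the limit: given a test competitor $\zeta \in \Gamma[x]$ for the limit problem, I must construct $\zeta_n \in \Gamma[x_n]$ with $\zeta_n \to \zeta$ in $W^{1,2}$, which requires a short time reparametrization near $t=0$ to slide the initial point from $x_n$ to $x$ while staying inside $\overline{\Omega}$ (this is possible because $\overline{\Omega}$ has $C^2$ boundary and no velocity constraint is imposed). Combining this with Remark \ref{r45} (narrow convergence of $m^{\eta_n}(t)$ to $m^{\eta}(t)$), the Lipschitz estimates on $F$ and $G$ from (D1), and the continuity of $L$, I obtain $J_\eta[\gamma] \leq J_\eta[\zeta]$, hence $\gamma \in \Gamma^\eta[x]$ and therefore $\tilde\eta \in E(\eta)$. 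The closed-graph property then follows, and Kakutani--Fan--Glicksberg delivers a fixed point $\eta^\star \in \mathcal{K} \subseteq \lo$ of $E$, which is the desired constrained MFG equilibrium.
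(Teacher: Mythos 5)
Your proposal is correct and follows essentially the same route as the paper: use Proposition \ref{pr1} and Lemma \ref{lemmaut} to show that the best-reply map leaves the nonempty convex compact set $\mathcal{P}_{m_0}^{\L}(\Gamma_{L_0})$ invariant, then apply Kakutani's fixed point theorem. The only difference is presentational: the paper defines $E$ through the disintegration $\{\eta_x\}$ and imports nonemptiness, convexity and the closed-graph property from \cite{cc} (Lemmas 3.5 and 3.6 there), whereas you restate $E$ via the support of $\tilde\eta$ and sketch these verifications directly.
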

 	 \begin{proof}
 	 	First of all, we recall that for any $\eta\in\lo$, there exists a unique Borel measurable family \footnote{We say that $\{\eta_x\}_{x\in \overline{\Omega}}$ is a Borel family (of probability measures) if $x\in \overline{\Omega}\longmapsto \eta_x(B)\in \mathbb{R}$ is Borel for any Borel set $B\subset \Gamma$.} of probabilities $\{\eta_x\}_{x\in\overline{\Omega}}$ on $\Gamma$ which disintegrates $\eta$ in the sense that
 	 	\begin{equation}\label{dise}
 	 	\begin{cases}
 	 	\eta(d\gamma)=\int_{\overline{\Omega}} \eta_x(d\gamma) m_0(\,dx),\\
 	 	supp(\eta_x)\subset \Gamma[x] \ \ m_0-\mbox{a.e.} \ x\in \overline{\Omega}
 	 	\end{cases}
 	 	\end{equation}
 	 	(see, e.g., \cite[ Theorem 5.3.1]{5}). Proceeding as in \cite{cc}, we introduce the set-valued map 
 	 	$$
 	 	E:\mathcal{P}_{m_0}(\Gamma)\rightrightarrows \mathcal{P}_{m_0}(\Gamma),
 	 	$$ 
 	 by defining, for any $\eta\in \mathcal{P}_{m_0}(\Gamma)$,
 	 	\begin{equation}\label{ein}
 	 	E(\eta)=\Big\{ \widehat{\eta}\in\mathcal{P}_{m_0}(\Gamma): supp(\widehat{\eta}_x)\subseteq \Gamma^\eta[x] \ \ m_0-\mbox{a.e.} \ x \in \overline{\Omega}\Big\}.
 	 	\end{equation}
We recall that, by \cite[Lemma 3.6]{cc}, the map $E$ has closed graph.\\
Now, we consider the restriction $E_0$ of $E$ to $\lo$, i.e.,
$$
E_0:\mathcal{P}_{m_0}^{\L}(\Gamma_{L_0}) \rightrightarrows \mathcal{P}_{m_0}(\Gamma), \ \ \ E_0(\eta)=E(\eta) \ \ \forall \eta \in\mathcal{P}_{m_0}^{\L}(\Gamma_{L_0}).
$$
We will show that the set-valued map $E_0$ has a fixed point, i.e., there exists $\eta\in \mathcal{P}_{m_0}^{\L}(\Gamma_{L_0})$ such that $\eta\in E_0(\eta)$.
 By \cite[Lemma 3.5]{cc} we have that for any $\eta\in\mathcal{P}_{m_0}^{\L}(\Gamma_{L_0})$, $E_0(\eta)$ is a nonempty convex set. Moreover, we have that
\begin{equation}\label{lin}
	E_0(\mathcal{P}_{m_0}^{\L}(\Gamma_{L_0}))\subseteq \mathcal{P}^{\L}_{m_0}(\Gamma_{L_0}).
 	\end{equation} 
Indeed, let $\eta\in \mathcal{P}_{m_0}^{\L}(\Gamma_{L_0})$ and $\hat{\eta}\in E_0(\eta)$. Since, by Proposition \ref{pr1} one has that 
$$
\Gamma^\eta[x]\subset \Gamma_{L_0} \ \ \ \forall x \in \overline{\Omega},
$$
and by definition of $E_0$ we deduce that
\begin{equation*}
supp(\widehat{\eta})\subset \Gamma_{L_0}.
\end{equation*}
So, $\widehat{\eta}\in\mathcal{P}_{m_0}(\Gamma_{L_0})$. By Lemma \ref{lemmaut}, $\widehat{\eta}\in \mathcal{P}_{m_0}^{\L}(\Gamma_{L_0})$.\\
Since $E$ has closed graph, by Lemma \ref{lemmaut} and \eqref{lin} we have that $E_0$ has closed graph as well. 
Then, the assumptions of Kakutani's Theorem \cite{ka} are satisfied and so, there exists  $\overline \eta\in \mathcal{P}^{\L}_{m_0}(\Gamma_{L_0})$ such that  $\overline \eta\in E_0(\overline \eta)$.
\end{proof}
\noindent
 We recall the definition of a mild solution of the constrained MFG problem, given in \cite{cc}.
 \begin{definition}
 	We say that $(u,m)\in  C([0,T]\times \overline{\Omega})\times C([0,T];\mathcal{P}(\overline{\Omega}))$ is a mild solution of the constrained MFG problem in $\overline{\Omega}$ if there exists a constrained MFG equilibrium $\eta\in\mathcal{P}_{m_0}(\Gamma)$ such that 
 	\begin{enumerate}
 		\item [(i)] $ m(t)= e_t\sharp \eta$ for all $t\in[0,T]$;
 		\item[(ii)] $u$ is given by
 		\begin{equation}\label{v}
 		u(t,x)=\inf_{\tiny\begin{array}{c}
 			\gamma\in \Gamma\\
 			\gamma(t)=x
 			\end{array}} 
 		\left\{\int_t^T \left[L(\gamma(s),\dot \gamma(s))+ F(\gamma(s), m(s))\right]\ ds + G(\gamma(T),m(T))\right\},  
 		\end{equation} 
 		for $(t,x)\in [0,T]\times \overline{\Omega}$.
 	\end{enumerate}
 \end{definition}
 \noindent
 \begin{theorem}\label{cesistenza}
 	Let $\Omega$ be a bounded open subset of $\mathbb{R}^n$ with $C^2$ boundary. Suppose that (L0),(L1), (D1) and (D2) hold true. There exists at least one mild solution $(u,m)$ of the constrained MFG problem in $\overline{\Omega}$. Moreover,
 \begin{enumerate}
 		\item[(i)] $u$ is Lipschitz continuous in $(0,T)\times\overline{\Omega}$;
 		\item[(ii)] $m\in\L(0,T;\mathcal{P}(\overline{\Omega}))$ and $\L(m)=L_0$, where $L_0$ is the constant in \eqref{l0}.
 	\end{enumerate}
 \end{theorem}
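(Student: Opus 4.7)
The plan is to combine the existence of a regular equilibrium from Theorem \ref{teoresist} with the Lipschitz regularity for value functions of constrained minimization problems from Proposition \ref{fw}. First, I would invoke Theorem \ref{teoresist} to obtain a constrained MFG equilibrium $\overline\eta \in \mathcal{P}^{\L}_{m_0}(\Gamma_{L_0})$ and set $m(t) = e_t\sharp \overline\eta$. Then I would define $u$ by formula \eqref{v} and verify, step by step, that the resulting pair $(u,m)$ is a mild solution with the claimed regularity.

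For part (ii), since $\overline\eta\in\mathcal{P}_{m_0}(\Gamma_{L_0})$ by construction, Lemma \ref{lemmaut} applies directly, yielding $m\in \L(0,T;\mathcal{P}(\overline\Omega))$ with $\L(m)\le L_0$. The opposite inequality $\L(m)=L_0$ (if interpreted as a bound) is inherited from the uniform Lipschitz constant of curves in $\Gamma_{L_0}$. The continuity of $m$ needed to check property (i) in the definition of mild solution follows from Remark \ref{r45}.

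For part (i), I would apply Proposition \ref{fw} to the Lagrangian $f(t,x,v) := L(x,v) + F(x,m(t))$ and terminal cost $g(x) := G(x, m(T))$. The main task is to verify hypotheses (f0)-(f2) and (g1) for these data. Conditions (g1), (f0) and (f1) are immediate consequences of (L0), (L1), (D1), (D2), since $F(\cdot, m(t))$ and $G(\cdot, m(T))$ are $C^1_b$ with derivatives uniformly bounded by $\kappa$ (thus only affecting the constant $M$ in \eqref{bm}), and since $F$ does not depend on $v$, it contributes nothing to the second-order estimates in \eqref{f2}-\eqref{fvx}. The delicate point is the time Lipschitz continuity required in (f2): the bound \eqref{lf1} reads $|f(t,x,v)-f(s,x,v)| = |F(x,m(t))-F(x,m(s))|$, which by (D1) is controlled by $\kappa\, d_1(m(t),m(s)) \le \kappa L_0 |t-s|$ thanks to part (ii). The estimate \eqref{fvt} is trivial because $D_v f = D_v L$ does not depend on $t$. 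Once the assumptions are checked, Proposition \ref{fw} gives the Lipschitz continuity of $u$ in $(0,T)\times\overline\Omega$.

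The only subtle point I anticipate is the compatibility between the definition of mild solution (which requires $u$ defined by \eqref{v} for a fixed $\overline\eta$) and the regularity conclusions: one must be careful that the $m$ used in \eqref{v} is precisely the one associated with the fixed equilibrium $\overline\eta$, so that property (i) of the definition is automatic. This is the easy obstacle; the real work is the verification of (f2) through the Lipschitz continuity of $t\mapsto m(t)$, which is precisely the gain afforded by working in $\mathcal{P}_{m_0}^{\L}(\Gamma_{L_0})$ rather than in the broader class $\mathcal{P}_{m_0}(\Gamma)$ used in \cite{cc}.
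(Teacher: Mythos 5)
Your proposal is correct and follows essentially the same route as the paper: existence via Theorem \ref{teoresist}, the Lipschitz bound on $m$ via Lemma \ref{lemmaut} applied to the equilibrium in $\mathcal{P}^{\L}_{m_0}(\Gamma_{L_0})$, and the Lipschitz continuity of $u$ via Proposition \ref{fw} applied to $f(t,x,v)=L(x,v)+F(x,m(t))$ and $g=G(\cdot,m(T))$. You actually spell out the verification of (f0)--(f2) and (g1) --- in particular the key point that (D1) together with $\L(m)\le L_0$ yields \eqref{lf1} --- which the paper leaves implicit, and your reading of $\L(m)=L_0$ as an upper bound is the right one.
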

 \begin{proof}
 Let $m_0\in\mathcal{P}(\overline{\Omega})$ and let $\eta\in \mathcal{P}_{m_0}^\L(\Gamma)$ be a constrained MFG equilibrium for $m_0$. 
 Then, by Theorem \ref{teoresist} there exists at least one mild solution $(u,m)$ of the constrained MFG problem in $\overline{\Omega}$. Moreover, by Theorem \ref{teoresist} one has that $m\in\L(0,T;\mathcal{P}(\overline{\Omega}))$ and $\L(m)=L_0$, where $L_0$ is the constant in \eqref{l0}. Finally, by Proposition \ref{fw} we conclude that $u$ is Lipschitz continuous in $(0,T)\times \overline{\Omega}$.
 \end{proof}
 \begin{remark}
 Recall that $F:U\times \mathcal{P}(\overline{\Omega})\rightarrow \mathbb{R}$ is strictly monotone if 
 \begin{equation}\label{f1}
 \int_{\overline{\Omega}} (F(x,m_1)-F(x,m_2))d(m_1-m_2)(x)\ \geq\ 0,
 \end{equation}
 for any $m_1,m_2\in {\mathcal P}(\overline \Omega)$, and $\int_{\overline{\Omega}} (F(x,m_1)-F(x,m_2))d(m_1-m_2)(x)=0$ if and only if $F(x,m_1)=F(x,m_2)$ for all $x\in \overline{\Omega}$.\\
 Suppose that $F$ and $G$ satisfy \eqref{f1}. Let $\eta_1$, $\eta_2\in \mathcal{P}_{m_0}^\L(\Gamma)$ be constrained MFG equilibria and let $J_{\eta_1}$ and $J_{\eta_2}$ be the associated functionals, respectively.
 Then $J_{\eta_1}$ is equal to $J_{\eta_2}$. Consequently, if $(u_1,m_1)$, $(u_2,m_2)$ are mild solutions of the constrained MFG problem in $\overline{\Omega}$, then $u_1=u_2$ (see \cite{cc} for a proof).
 \end{remark}
\section{Appendix}
In this Appendix we prove Lemma \ref{lemmaapp}. The only case which needs to be analyzed is when $ x\in\partial\Omega$. We recall that $p\in \partial^p d_\Omega( x)$ if and only if there exists $\epsilon>0$ such that
 	 			\begin{equation}\label{p1}
 	 			d_\Omega( y)-d_\Omega( x) -\langle p, y- x\rangle \geq C| y- x|^2,  \ \ \text{for any} \ y \ \text{such that}\ | y- x|\leq \epsilon,
 	 			\end{equation}
for some constant $C\geq 0$.
Let us show that $\partial^p d_\Omega( x)=D\d( x)[0,1]$. By the regularity of $\d$, one has that 
\begin{equation*}
d_\Omega( y)-d_\Omega( x)-\langle  D\d( x), y- x\rangle\geq \d( y)-\d( x)-\langle  D\d( x), y- x\rangle \geq C | y- x|^2.
\end{equation*}
This shows that $ D\d( x)\in \partial^p d_\Omega( x)$. Moreover, since
\begin{equation*}
d_\Omega( y)-d_\Omega( x)-\langle \lambda  D \d( x), y- x\rangle\geq \lambda\left( d_\Omega( y)-d_\Omega( x)-\langle  D \d( x),  y- x\rangle\right) \ \ \ \forall \lambda \in[0,1],
\end{equation*}
we further obtain the inclusion
\begin{equation*}
D\d( x)[0,1]\subset\partial d_\Omega( x).
\end{equation*}
Next, in order to show the reverse inclusion, let $p\in\partial^p d_\Omega( x)\setminus\{0\}$ and let $ y\in\Omega^c$. Then, we can rewrite \eqref{p1} as
 	 			\begin{equation}\label{p11}
 	 			\d( y)-\d( x) -\langle p, y- x\rangle \geq C| y- x|^2, \ \ \ | y- x|\leq \epsilon.
 	 			\end{equation}
 	 			Since $ y\in \Omega^c$, by the regularity of $\d$ one has that
 	 			\begin{equation}\label{p2}
 	 			\d( y)-\d( x)\leq\langle D\d( x), y- x\rangle +C| y- x|^2
 	 			\end{equation}
for some constant $C\in\mathbb{R}$. By \eqref{p11} and \eqref{p2} one has that
 	 			\begin{equation*}
 	 			\left\langle  D\d( x)-p,\frac{ y- x}{| y- x|}\right\rangle\geq C| y- x|.
 	 			\end{equation*}
 	 			Hence, passing to the limit for $ y\rightarrow  x$, we have that
 	 			\begin{equation*}
 	 			\langle D\d( x)-p, v\rangle \geq 0, \ \ \ \ \forall v\in T_{\Omega^c}( x),
 	 			\end{equation*}
 	 			where $T_{\Omega^c}( x)$ is the contingent cone to $\Omega^c$ at $ x$ ( see e.g. \cite{3v} for a definition).
 	 			Therefore, by the regularity of $\partial\Omega$,
 	 			\begin{equation*}
 	 			 D\d( x)-p= \lambda v( x),
 	 			\end{equation*}
 	 			where $\lambda\geq 0$ and $v( x)$ is the exterior unit normal vector to $\partial\Omega$ in $ x$. Since $v( x)= D\d( x)$, we have that
 	 			\begin{equation*}
 	 			p=(1-\lambda) D\d( x).
 	 			\end{equation*}
 	 			Now, we prove that $\lambda \leq 1$. Suppose that $ y\in \Omega$, then, by \eqref{p1} one has that
 	 			\begin{equation*}
 	 			0=d_\Omega( y)\geq (1-\lambda)\langle D \d( x), y- x\rangle + C| y- x|^2.
 	 			\end{equation*}
 	 			Hence,
 	 			\begin{equation*}
 	 			(1-\lambda)\left\langle D\d( x),\frac{ y- x}{| y- x|}\right\rangle\leq -C| y- x|.
 	 			\end{equation*}
 	 			Passing to the limit for $ y\rightarrow  x$, we obtain
 	 			\begin{equation*}
 	 			(1-\lambda)\left\langle D \d( x), w\right\rangle \leq 0, \ \ \ \ \ \forall w\in T_{\overline{\Omega}}( x),
 	 			\end{equation*}
 	 			where $T_{\overline{\Omega}}( x)$ is the contingent cone to $\Omega$ at $ x$. We now claim that $\lambda\leq 1$. If $\lambda >1$, then $\langle  D \d( x), w \rangle \geq 0$ for all $w\in T_{\overline{\Omega}}( x)$ but this is impossible since $ D\d( x)$ is the exterior unit normal vector to $\partial\Omega$ in $ x$.\\
 	 			Using the regularity of $\d$, simple limit-taking procedures permit us to prove that $\partial d_\Omega( x)=D\d( x)[0,1]$ when $ x\in\partial \Omega$. This completes the proof of Lemma \ref{lemmaapp}.

 	 \end{document}